\newtheorem{thm}{Theorem}
\newtheorem{cor}{Corollary}
\newtheorem{remark}{Remark}
\theoremstyle{remark}
\newtheorem{exa}{Example}
\theoremstyle{definition}
\newcommand{\R}{\mathbb{R}}
\newcommand{\N}{\mathbb{N}}
\newcommand{\rd}{\,{\rm d}}
\newcommand{\bsa}{{\boldsymbol a}}
\newcommand{\bsx}{{\boldsymbol x}}
\newcommand{\bsy}{{\boldsymbol y}}
\newcommand{\bst}{{\boldsymbol t}}
\newcommand{\uu}{\mathfrak{u}}
\newcommand{\vv}{\mathfrak{v}}
\newcommand{\cP}{\mathcal{P}}
\title{Intractability results for integration in tensor product spaces}  
\author{Erich Novak and  Friedrich Pillichshammer}
\date{}
\begin{document}

\maketitle

\begin{abstract}
In this paper we study lower bounds on the worst-case error of numerical integration 
in tensor product spaces. Thereby integrands are assumed to belong to $d$-fold 
tensor products of spaces of univariate functions. 
As reference we use the $N$-th minimal error of linear rules that use $N$ function values. The information complexity is the minimal number $N$ of function evaluations that is necessary such that the $N$-th minimal error is less than a factor $\varepsilon$ times the initial error, i.e., the error for $N=0$, where $\varepsilon$ belongs to $(0,1)$. We are interested to which extent the information complexity depends on the number $d$ of variables of the integrands.  If the information complexity grows exponentially fast in $d$, then the integration problem is said to suffer from the curse of dimensionality.

Under the assumption of the existence of a worst-case function for the uni-variate problem, which is a function from the considered space whose integral attains the initial error, we present two methods for providing good lower bounds on the information complexity. The first method is based on a suitable decomposition of the worst-case function. This method can be seen as a generalization of the method of decomposable reproducing kernels, that is often successfully applied when integration in Hilbert spaces with a reproducing kernel is studied. 
The second method, although only applicable for positive quadrature rules, 
has the advantage, that it does not require a suitable 
decomposition of the worst-case function. 
Rather, it is based on a spline approximation of the worst-case function
and can be used for analytic functions.  

The methods presented can be applied to problems beyond the Hilbert space setting. For demonstration purposes we apply them to several examples, notably to uniform integration over the unit-cube, weighted integration over the whole space, and integration of infinitely smooth functions over the cube. Some of these results have interesting consequences in discrepancy theory. 
\end{abstract}

\centerline{\begin{minipage}[hc]{130mm}{
{\em Keywords:} Numerical integration, worst-case error, discrepancy, curse of dimensionality, quasi-Monte Carlo\\
{\em MSC 2010:} 65C05, 65Y20, 11K38}
\end{minipage}}

\section{Introduction}

In this paper we study numerical integration of $d$-variate functions from spaces which are $d$-fold tensor products of identical normed function spaces of uni-variate functions. This is a typical setting that belongs to general linear tensor product problems, that are studied in a multitude of papers (see, e.g., \cite{NW08,NW10} and the references therein).

Let $(F_1,\|\cdot\|_1)$ be a normed space of univariate integrable functions over the Borel measurable domain $D_1 \subseteq \R$ and let, for $d \in \N$, $(F_d,\|\cdot\|_d)$ be a $d$-fold tensor product space $$F_d=\underbrace{F_1\otimes \ldots 
\otimes F_1}_{d-\mbox{{\scriptsize fold}}},$$ which consists of functions over 
the domain $D_d=D_1\times \ldots \times D_1$ ($d$-fold), 
equipped with a crossnorm $\|\cdot\|_d$, i.e., for elementary 
tensors $f(x_1,\ldots,x_d)=f_1(x_1)\cdots f_d(x_d)$ with $f_j:D_1 \rightarrow \R$ 
for $j \in \{1,\ldots,d\}$, we have $\|f\|_d=\|f_1\|_1 \cdots \|f_d\|_1$.
We do not not claim  that the space $F_d$ is uniquely defined by the requirements above.

Consider multivariate integration $$I_d(f):=\int_{D_d} f(\bsx) \rd \bsx \quad \mbox{for $f \in F_d$}.$$ The so-called initial error of this integration problem is 
\begin{equation}\label{def:int:err}
e(0,d)=\|I_d\|=\sup_{f \in F_d\atop \|f\|_d \le 1} \left|I_d(f)\right|.
\end{equation}

We approximate the integrals $I_d(f)$ by algorithms based on $N$ function evaluations of the form 
\begin{equation*}
A_{d,N}(f)=\varphi(f(\bsx_1),f(\bsx_2),\ldots,f(\bsx_N)),
\end{equation*}
where $\varphi:\R^N \rightarrow \R$ is an arbitrary function and where $\bsx_1,\bsx_2,\ldots,\bsx_N$ are points in $D_d$, called integration nodes. Typical examples are linear algorithms of the form 
\begin{equation}\label{def:linAlg}
A_{d,N}(f)=\sum_{k=1}^N a_k f(\bsx_k),
\end{equation}
where $\bsx_1,\bsx_2,\ldots,\bsx_N$ are in $D_d$ and $a_1,a_2,\ldots,a_N$ are real weights that we call integration weights. If $a_1=a_2=\ldots =a_N=1/N$, then the linear algorithm \eqref{def:linAlg} is a so-called quasi-Monte Carlo algorithm, which is denoted by $A_{d,N}^{{\rm QMC}}$.

Define the worst-case error of an algorithm by 
\begin{equation}\label{eq:wce}
e(F_d,A_{d,N})=\sup_{f \in F_d\atop \|f\|_d\le 1} \left|I_d(f)-A_{d,N}(f)\right|.
\end{equation}
It is well known (see \cite{bak} or \cite[Theorem~4.7]{NW08}) that in order to minimize the worst-case error it suffices to consider linear algorithms of the form \eqref{def:linAlg}. So it suffices to restrict the following considerations to linear algorithms. 
We define the $N$-th minimal worst-case error as $$e(N,d):=\min_{A_{d,N}} e(F_d,A_{d,N})$$ where the minimum is extended over all linear algorithms of the form \eqref{def:linAlg} based on $N$ function evaluations along points $\bsx_1,\bsx_2,\ldots,\bsx_N$ from $D_d$ and with real integration weights $a_1,\ldots,a_N$.

For $\varepsilon \in (0,1)$ and $d \in \mathbb{N}$ the information complexity is defined as $$N(\varepsilon,d)=\min\{N \in \mathbb{N} \ : \ e(N,d) \le \varepsilon \, e(0,d)\}.$$ The information complexity of a problem (in the present paper we only consider integration problems) is a well studied concept in information based complexity. It is the minimal number of function evaluations (of ``information'') that is necessary in order to be able to reduce the initial error by a factor of $\varepsilon$, the so-called error demand.

Usually one is interested in the grow rate of $N(\varepsilon,d)$ when the error demand $\varepsilon$ tends to 0 and the dimension $d$ tends to infinity. A problem is called tractable, if $N(\varepsilon,d)$ grows at most sub-exponential in $d$ and in $\varepsilon^{-1}$ when $d \to \infty$ and $\varepsilon \to 0$. In literature many notions of tractability are studied in order to classify the sub-exponential grow rates like, e.g., weak tractability, polynomial tractability, strong polynomial tractabilty. See, for example, the trilogy \cite{NW08,NW10,NW12}. If, however, $N(\varepsilon,d)$ grows at least exponentially fast in $d$ and/or $\varepsilon^{-1}$ for $d \to \infty$ and $\varepsilon \to 0$, then the problem is said to be intractable. If, in particular, $N(\varepsilon,d)$ grows at least exponentially fast in $d$ for a fixed $\varepsilon>0$, then it is said that the problem suffers from the curse of dimensionality. 

For a given (integration) problem it is an important question to decide whether it is tractable or not. In order to verify the possible curse of dimensionality, one needs good lower bounds on the $N$-th minimal error in dimension $d$. This, however, is often a very difficult task. There are some methods like the method of constructing suitable ``fooling-functions'' (also known as ``bump functions'') 
which are functions in the given space, which are zero along a given set of integration nodes, but which have a large value of the integral (see, e.g., the survey \cite[Section~2.7]{DHP15}).

A strong method for proving lower bounds is the ``method of reproducing kernels'' 
that was invented by Novak and Wo\'{z}niakowski in \cite{NW01} and which is presented in detail also in the book \cite[Chapter~11]{NW10}. However, this method can only be applied for integration problems in reproducing kernel Hilbert spaces whose reproducing kernel $K_1$ satisfies the (strong) property of being decomposable, which means that there exists a number $a \in \R$ such that the sets 
$$D_{(0)} :=\{x \in D_1 \ : \ x \le a\}\quad \mbox{and}\quad D_{(1)} := \{x \in D_1 \ : \ x \ge a\}$$
are nonempty and $$K_1(x,t)=0 \quad \mbox{for $(x,t) \in D_{(0)}\times D_{(1)} \cup D_{(1)}\times D_{(0)}$,}$$ or whose reproducing kernel $K_1$ has an additive part, which is decomposable in the above sense (see \cite[Section~11.5]{NW10}).

It is the aim of the present paper to reformulate the method of decomposable kernels in a way such that it can be applied to other settings beyond reproducing kernel Hilbert spaces. Our approach is based on a decomposition of a so-called worst-case function, which is a function in the unit ball of the function space under consideration, whose integral attains the initial error. In general, such a worst-case function does not have to exist. Our results rely on the assumption of the existence of a worst-case function. We will assume that the worst-case function can be decomposed in a suitable way, which mimics the ``kernel-decomposition'' in the method of decomposable kernels. If this is the case, then we can show that the integration problem suffers from the curse of dimensionality. From this point of view our approach might be called the ``method of decomposable worst-case functions''. The method will be presented in Section~\ref{sec:meth1}. We discuss the cases of worst-case functions that are fully decomposable (Section~\ref{sec:meth1.1}) as well as worst-case function which only have a decomposable additive part (Section~\ref{sec:meth2}), where the latter can be seen as generalization of the method for non-decomposable kernels from \cite[Section~11.5]{NW10} beyond the Hilbert space setting.

In Section~\ref{sec:exa} we discuss several applications of the presented method. First we revisit the method of decomposable kernels and show that the new approach indeed comprises the ``method of decomposable kernels'' as a special case. Then we consider uniform integration of functions of smoothness $r$. The special case $r=1$ shows that the $L_p$-discrepancy anchored in a point $a \in (0,1)$ suffers from the curse of dimensionality for all $p \in [1,\infty)$. The same holds true for the $L_p$-quadrant discrepancy at $a \in (0,1)$. Finally, we consider weighted integration over $\R^d$ of functions of smoothness $r$. 

The assumption of having a {\it suitably decomposable} worst-case function is rather strong. Furthermore, one of the conditions for the proposed decomposition is rather hard to check. This problem can be overcome when one restricts the consideration to positive linear rules. This will be done in Section~\ref{sec:meth3}. We propose a spline approximation of the worst-case function which has the 
advantage that it works completely without any decomposition;
hence it can be used for analytic functions.  
Applications of this method are presented in Section~\ref{sec:exa2}, 
which concludes the paper.

Throughout the paper we use the notation $(x)_+=x$ if $x >0$ and 0 if $x \le 0$. 
Furthermore, for $d \in \N$ we use the notation $[d]:=\{1,2,\ldots,d\}$.  

\section{General quadrature rules}\label{sec:meth1}

In the following we use a notation that is very close to the notation in \cite[Section~11.4]{NW10} with the intention to make it easier for the reader to compare our approach to the ``method of decomposable kernels'' as presented in \cite[Section~11.4]{NW10}.

Assume that there exists a worst-case function $h_1 \in F_1$, i.e., a function $h_1 \in F_1$ which satisfies $$e(0,1)=I_1\left(\frac{h_1}{\|h_1\|_1} \right), $$ 
where $e(0,1)=\|I_1\|$ is the initial error for integration in $F_1$ like in \eqref{def:int:err}.
In addition we always assume that the tensor product
$$
h_d(\bsx) :=  \prod_{j=1}^d h_1(x_j)
$$
is a worst-case function for $d>1$ and hence 
$e(0,d) = e(0,1)^d$ is the initial error for integration in $F_d$.

\begin{remark}\rm
As already mentioned, in general a worst-case function does not have to exist. For example take the anchored space $$F_1=\{f:[0,1]\rightarrow \R\ : \ f(0)=0,\, f \, \mbox{ abs. cont. and }\, f' \in L_1([0,1])\},$$ where here and later on ``abs. cont.'' stands for ``absolutely continuous''. As norm on $F_1$ we use $$\|f\|_{1,1}= \int_0^1 |f'(t)| \rd t.$$ Then the norm of the integral operator $I_1$ over $F_1$ is 1, but, because of the anchor-condition and the continuity of the functions in $F_1$, there does not exist a function in the unit ball of $F_1$ whose integral is 1. 
\end{remark}

Examples for integration problems with an existing worst-case function will be given throughout the paper. 

\subsection{Decomposable worst-case functions}\label{sec:meth1.1}

Assume that a worst-case function $h_1$ exists and can be decomposed in the form 
\begin{equation}\label{dech1}
h_1(x)=h_{1,(0)}(x)+h_{1,(1)}(x)\quad \mbox{for $x \in D_1$}
\end{equation}
with the following properties:
\begin{enumerate}
\item[\bf{D1}] $h_{1,(0)}, h_{1,(1)} \in F_1$;
\item[\bf{D2}] there exists a decomposition point $a \in \R$ such that $$D_{(0)}:=\{x \in D_1 \ : \ x \le a\} \quad \mbox{and}\quad D_{(1)}:=\{x \in D_1 \ : \ x \ge a\}$$ are non-empty and the support of $h_{1,(0)}$ is contained in $D_{(0)}$ and the support of $h_{1,(1)}$ is contained in $D_{(1)}$;
\item[\bf{D3}] the integrals $I_1(h_{1,(0)})$ and $I_1(h_{1,(1)})$ are positive; and
\item[\bf{D4}] for every $d \in \N$ and every family $\mathcal{D}$ of subsets of $[d]$ we have 
\begin{eqnarray*}
\left\|\sum_{\uu \in \mathcal{D}}  \prod_{j \in \uu} h_{1,(0)}(x_j) \prod_{j \in [d] \setminus \uu} h_{1,(1)}(x_j)\right\|_d \le  \left\|\sum_{\uu \subseteq [d]} \prod_{j \in \uu} h_{1,(0)}(x_j) \prod_{j \in [d] \setminus \uu} h_{1,(1)}(x_j)\right\|_d.
\end{eqnarray*}
\end{enumerate}

Some remarks are in order:

\begin{remark}\label{re1}\rm
\begin{itemize}
\item Obviously, $D_{(0)} \cup D_{(1)}=D_1$ and  $D_{(0)} \cap D_{(1)} \subseteq \{a\}$.
\item For $d \in \mathbb{N}$ and $\bsx=(x_1,\ldots,x_d) \in D_d$ we have 
\begin{align*}
h_d(\bsx) = & \prod_{j=1}^d h_1(x_j)\\
= & \prod_{j=1}^d(h_{1,(0)}(x_j)+h_{1,(1)}(x_j))\\
= & \sum_{\uu \subseteq [d]} \prod_{j \in \uu} h_{1,(0)}(x_j) \prod_{j \in [d] \setminus \uu} h_{1,(1)}(x_j)
\end{align*}
and, in particular, $$\left\|\sum_{\uu \subseteq [d]} \prod_{j \in \uu} h_{1,(0)}(x_j) \prod_{j \in [d] \setminus \uu} h_{1,(1)}(x_j)\right\|_d = \|h_d\|_d=\|h_1\|_1^d.$$ 
\item In view of the last item property~{\bf D4} can be interpreted as follows: let $g_d$ be any restriction of $h_d$ to arbitrary quadrants $$\bigotimes_{j=1}^d E_j\quad \mbox{where $E_j \in \{D_{(0)},D_{(1)}\}$,}$$ and zero out off these quadrants, then $\|g_d\|_d \le \|h_d\|_d.$ Indeed, later we will use property~{\bf D4} in exactly this vein.
\end{itemize}
\end{remark}

Now we state our first theorem.

\begin{thm}\label{thm1}
Let $(F_1,\|\cdot\|_1)$ be a normed space of univariate functions over $D_1 \subseteq \R$ and let for $d \in \N$, $(F_d,\|\cdot\|_d)$ be the $d$-fold tensor product space equipped with a cross norm $\|\cdot\|_d$. Assume in $F_1$ exists a worst-case function $h_1$ which can be decomposed according to \eqref{dech1} satisfying properties~{\bf D1}-{\bf D4}. Then for the $N$-th minimal integration error in $F_d$ we have 
\begin{equation}\label{er:est:thm1}
e(N,d) \ge (1-N \alpha^d)_+ \, e(0,d),
\end{equation}
 where 
\begin{equation}\label{def:alpha:thm1}
\alpha:= \frac{\max(I_1(h_{1,(0)}),I_1(h_{1,(1)}))}{I_1(h_{1,(0)}) + I_1(h_{1,(1)})} \in \left[\frac{1}{2},1\right).
\end{equation}
Furthermore, for the information complexity we have $$N(\varepsilon,d) \ge \left(\frac{1}{\alpha}\right)^d\, (1-\varepsilon) \quad \mbox{for all $\varepsilon \in (0,1)$ and $d \in \N$.}$$ In particular, the integration problem over $F_d$ suffers from the curse of dimensionality.
\end{thm}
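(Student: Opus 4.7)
The plan is to adapt the fooling-function construction underlying the method of decomposable reproducing kernels, with the product expansion of $h_d$ playing the role of the kernel's tensor structure. Fix any linear quadrature $A_{d,N}(f)=\sum_{k=1}^N a_k f(\bsx_k)$ with nodes $\bsx_k=(x_{k,1},\ldots,x_{k,d})\in D_d$. Following Remark~\ref{re1} I would write $h_d=\sum_{\uu\subseteq[d]}h_{d,\uu}$ with
\[
h_{d,\uu}(\bsx):=\prod_{j\in\uu}h_{1,(0)}(x_j)\prod_{j\in[d]\setminus\uu}h_{1,(1)}(x_j),
\]
so that by \textbf{D2} the summand $h_{d,\uu}$ is supported in the quadrant $\bigotimes_{j\in\uu}D_{(0)}\times\bigotimes_{j\notin\uu}D_{(1)}$. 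To each node I assign the label $\uu_k:=\{j\in[d]:x_{k,j}\le a\}$ and form the fooling function
\[
g_d(\bsx):=\sum_{\uu\in\mathcal{S}}h_{d,\uu}(\bsx),\qquad \mathcal{S}:=\{\uu\subseteq[d]\}\setminus\{\uu_1,\ldots,\uu_N\}.
\]

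Two short calculations drive the proof. First, property \textbf{D2} forces $h_{d,\uu}(\bsx_k)=0$ whenever $\uu\ne\uu_k$, so $g_d(\bsx_k)=0$ for every $k$ and hence $A_{d,N}(g_d)=0$. Second, writing $\beta_0:=I_1(h_{1,(0)})>0$ and $\beta_1:=I_1(h_{1,(1)})>0$ (positive by \textbf{D3}), Fubini and the tensor structure of $I_d$ give
\[
I_d(g_d)=(\beta_0+\beta_1)^d-\sum_{k=1}^N\beta_0^{|\uu_k|}\beta_1^{d-|\uu_k|}\;\ge\;(1-N\alpha^d)(\beta_0+\beta_1)^d,
\]
because each subtracted term is at most $\max(\beta_0,\beta_1)^d=\alpha^d(\beta_0+\beta_1)^d$. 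Since $h_1$ is a worst-case function, $\beta_0+\beta_1=I_1(h_1)=e(0,1)\,\|h_1\|_1$, and the cross-norm property supplies $(\beta_0+\beta_1)^d=e(0,d)\,\|h_d\|_d$. The missing ingredient, $\|g_d\|_d\le\|h_d\|_d$, is precisely what property \textbf{D4} was tailored to provide (in the form spelled out in the third bullet of Remark~\ref{re1}).

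Combining the three estimates,
\[
e(F_d,A_{d,N})\;\ge\;\frac{|I_d(g_d)-A_{d,N}(g_d)|}{\|g_d\|_d}\;\ge\;(1-N\alpha^d)_+\,e(0,d),
\]
and taking the infimum over linear quadratures yields \eqref{er:est:thm1}. The information-complexity statement follows by solving $(1-N\alpha^d)_+\le\varepsilon$ for $N$: the resulting inequality $N(\varepsilon,d)\ge(1-\varepsilon)(1/\alpha)^d$ grows exponentially in $d$ since $\alpha\in[1/2,1)$ (both numerator terms in \eqref{def:alpha:thm1} are strictly positive, so $\alpha<1$), which is the curse of dimensionality.

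The one point requiring genuine care is the boundary case $x_{k,j}=a$, in which $\bsx_k$ sits in several quadrants simultaneously and $\uu_k$ is no longer the unique label for which $h_{d,\uu}(\bsx_k)\ne0$. I would absorb this either by a density/perturbation argument (each interior node configuration is a limit of generic ones, and the norm bound is closed under this limit) or by deleting from $\mathcal{S}$ every label compatible with each boundary node and checking that the combined removed mass is still controlled by $N\alpha^d(\beta_0+\beta_1)^d$ up to a harmless multiplicative factor. Everything else is direct combinatorics together with straightforward use of \textbf{D1}--\textbf{D4}.
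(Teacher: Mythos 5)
Your proof is correct and follows essentially the same route as the paper's: the same quadrant-indexed fooling function built from the expansion $h_d=\sum_{\uu\subseteq[d]}h_{d,\uu}$, the same use of \textbf{D4} to get $\|g_d\|_d\le\|h_d\|_d$, and the same counting argument that at most $N$ quadrants are removed, each contributing at most $\max(\beta_0,\beta_1)^d=\alpha^d(\beta_0+\beta_1)^d$. The boundary case $x_{k,j}=a$ that you carefully flag is in fact also present (and silently glossed over) in the paper's own proof, where the assertion that at most $N$ of the sets $\bigotimes_{j}E_{j,\uu}$ contain a node likewise presupposes that no node sits on a hyperplane $x_j=a$; your proposed perturbation fix is a reasonable way to close that gap.
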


\begin{proof}
Let $N \in \mathbb{N}$ and let $\mathcal{P}=\{\bsx_1,\bsx_2,\ldots,\bsx_N\}$ be a set of $N$ quadrature nodes in $D_d$. 
Now we use {\bf D2}. For this specific $\mathcal{P}$ define the ``fooling function''
\begin{equation}\label{def:fool}
g_d(\bsx)=\sum_{\uu \in \mathcal{D}^*}  \prod_{j \in \uu} h_{1,(0)}(x_j) \prod_{j \in [d] \setminus \uu} h_{1,(1)}(x_j),
\end{equation}
where $\mathcal{D}^\ast=\mathcal{D}^\ast(\cP)$ is the set of all subsets $\uu$ of $[d]$ with the property that for all $k \in \{1,2,\ldots,N\}$ it holds that 
\begin{equation}\label{cond:u}
\bsx_k \not \in  \bigotimes_{j=1}^d E_{j,\uu},
\end{equation}
where 
\begin{equation*}
E_{j,\uu}:= \left\{ 
\begin{array}{ll}
D_{(0)} & \mbox{if $j \in \uu$,}\\
D_{(1)} & \mbox{if $j \in [d]\setminus \uu$.}
\end{array}
\right.
\end{equation*}
Note that $\bigotimes_{j=1}^d E_{j,\uu}$ contains the support of 
\begin{equation}\label{supp:u}
\prod_{j \in \uu} h_{1,(0)}(x_j) \prod_{j \in [d] \setminus \uu} h_{1,(1)}(x_j)
\end{equation}
and hence condition \eqref{cond:u} means that every point from the set $\mathcal{P}$ is outside the support of the function in \eqref{supp:u}. This, however, guarantees that 
$$g_d(\bsx_k)=0 \quad \mbox{for all $k \in \{1,2,\ldots,N\}$.}$$ 
Observe that the fooling function $g_d$ is an element of the $2^d$-dimensional 
(tensor product) subspace generated, for $d=1$, by the two functions 
$h_{1,(0)}$ and $h_{1,(1)}$. Formally, we prove the lower bound for this 
finite-dimensional subspace. 

Since the sum $g_d(\bsx)$ constitutes a partial sum of $h_d(\bsx)$ it follows from property~{\bf D4} that 
\begin{equation}\label{centass}
\|g_d\|_d \le \|h_d\|_d.
\end{equation}
Set $\widetilde{g}_d(\bsx):=\|h_d\|_d^{-1} g_d(\bsx)$ such that $\|\widetilde{g}_d\|_d \le 1$. 

Since $\widetilde{g}_d(\bsx_k)=0$ for all $k \in \{1,2,\ldots,N\}$ we have for any linear algorithm \eqref{def:linAlg} that is based on $\cP$ that
\begin{equation}\label{err_est10}
e(F_d,A_{d,N}) \ge I_d(\widetilde{g}_d) = \frac{1}{\|h_d\|_d} \sum_{\uu \in \mathcal{D}^*} (I_1(h_{1,(0)}))^{|\uu|} (I_1(h_{1,(1)}))^{d-|\uu|}.
\end{equation}
Due to the construction of $g_d$, at most $N$ of the sets $$\bigotimes_{j=1}^d E_{j,\uu} \quad \mbox{with $\uu \subseteq [d]$}$$ can contain a point from the node set $\mathcal{P}$. This implies that 
\begin{eqnarray}\label{eq:sumI1I1}
\lefteqn{\sum_{\uu \in \mathcal{D}^*} (I_1(h_{1,(0)}))^{|\uu|} (I_1(h_{1,(1)}))^{d-|\uu|}}\\
 & \ge & \sum_{\uu \subseteq [d]} (I_1(h_{1,(0)}))^{|\uu|} (I_1(h_{1,(1)}))^{d-|\uu|} - N \max_{\uu \subseteq [d]} (I_1(h_{1,(0)}))^{|\uu|} (I_1(h_{1,(1)}))^{d-|\uu|}\nonumber \\
 & = & \left(I_1(h_{1,(0)}) + I_1(h_{1,(1)})\right)^{d} - N \left( \max(I_1(h_{1,(0)}),I_1(h_{1,(1)}))\right)^{d}\nonumber \\
 & = &  (I_1(h_1))^d (1- N \alpha^d), \nonumber
\end{eqnarray}
with $\alpha$ from \eqref{def:alpha:thm1} (note that $\alpha \in [\tfrac{1}{2},1)$, because of {\bf D3}). Since, obviously, \eqref{eq:sumI1I1} is non-negative, we can re-write the lower bound as 
$$
\sum_{\uu \in \mathcal{D}^*} (I_1(h_{1,(0)}))^{|\uu|} (I_1(h_{1,(1)}))^{d-|\uu|} \ge (I_1(h_1))^d (1- N \alpha^d)_+.
$$
Inserting into \eqref{err_est10} yields
\begin{equation*}
e(N,d) \ge e(F_d,A_{d,N}) \ge \frac{(I_1(h_1))^d}{\|h_d\|_d}  (1-N \alpha^{d})_+ = e(0,d) (1-N \alpha^d)_+,
\end{equation*}
where we used that $(I_1(h_1))^d\|h_d\|_d^{-1}=(I_1(h_1) \|h_1\|_1^{-1})^d=(e(0,1))^d=e(0,d)$. This proves \eqref{er:est:thm1}.

Now let $\varepsilon \in (0,1)$ and assume that $e(N,d) \le \varepsilon \, e(0,d)$. This implies that $(1-N \alpha^d)_+ \le \varepsilon$. If $N < \alpha^{-d}$, then $$(1-N \alpha^d)=(1-N \alpha^d)_+ \le \varepsilon$$ and hence $$N \ge (1- \varepsilon) \left(\frac{1}{\alpha}\right)^d.$$ This lower bound trivially holds true if $N \ge \alpha^{-d}$. For the information complexity this implies $$N(\varepsilon,d) \ge  (1-\varepsilon) \left(\frac{1}{\alpha}\right)^d.$$ Note that $\alpha \in [\tfrac{1}{2},1)$ and hence $1< \frac{1}{\alpha}\le 2$. This finally implies that the integration problem over $F_d$ suffers from the curse of dimensionality.
\end{proof}

Property~{\bf D4} is of course not a very nice condition. 
More desirable would be a condition that has to be stated only in 
the one-dimensional case. It is easy to see that the 
condition $\|h_1\|_1=\|h_{1,(0)}\|_1+\|h_{1,(1)}\|_1$ always 
implies property~{\bf D4}. 
This condition is rather restrictive but can be fulfilled. We give an extreme example. 

\begin{exa}\rm
Let  $F_1$ be a space of dimension 2 of functions over the interval 
$[-1, 1]$ with the $L_1$-norm. 
Take any positive integrable function $h_1$ and define 
$h_{1,(0)}= h_1 \, {\bf 1}_{[-1,0]}$ and $h_{1,(1)}= h_1 \, {\bf 1}_{[0,1]}$. 
Thus properties~{\bf D1}-{\bf D3} are satisfied with $a=0$. 
It turns out that all three functions 
$h_1$, $h_{1,(0)}$ and $h_{1,(1)}$ are worst case functions and we obtain 
$$
e(N,d) =  e(0,d) = 1  \qquad \hbox{\rm for all} \qquad N < 2^d.
$$
This example shows also that the lower bound from the theorem is not optimal. 
\end{exa}

The situation just considered motivates the restriction to norms in $F_d$ with the following property: 

For $q \in [1,\infty]$ we say that $\|\cdot\|_d$ has the ``$q$-property'', if there exists a number $q \in [1,\infty)$ such that for any two functions $f,g \in F_d$ with disjoint support we have 
\begin{equation}\label{qprop}
\|f+g\|_d^q=\|f\|_d^q+\|g\|_d^q,
\end{equation}
or, for $q=\infty$, 
\begin{equation}\label{infty:prop}
\|f+g\|_d=\max(\|f\|_d,\|g\|_d).
\end{equation}
Examples of norms with the $q$-property will be given in Section~\ref{sec:exa}. If the considered norms $\|\cdot\|_d$ satisfy \eqref{qprop} or \eqref{infty:prop}, respectively,  for every $d \in \N$, then property~{\bf D4} follows already from the other assumptions.

\begin{thm}\label{thm2}
Let $(F_1,\|\cdot\|_1)$ be a normed space of univariate functions over $D_1 \subseteq \R$ and let for $d \in \N$, $(F_d,\|\cdot\|_d)$ be the $d$-fold tensor product space equipped with a cross norm $\|\cdot\|_d$. Assume that there exists a $q \in [1,\infty]$ such that $\|\cdot\|_d$ satisfies for all $d\in \N$ the $q$-property. Assume in $F_1$ exists a worst-case function $h_1$ which can be decomposed according to \eqref{dech1} satisfying properties~{\bf D1}-{\bf D3}. Then for the $N$-th minimal integration error in $F_d$ we have $$e(N,d) \ge (1-N \alpha^d)_+ \, e(0,d),$$ where $$\alpha:= \frac{\max(I_1(h_{1,(0)}),I_1(h_{1,(1)}))}{I_1(h_{1,(0)}) + I_1(h_{1,(1)})} \in \left[\frac{1}{2},1\right).$$ Furthermore, for the information complexity we have $$N(\varepsilon,d) \ge (1-\varepsilon) \left(\frac{1}{\alpha}\right)^d \quad \mbox{for all $\varepsilon \in (0,1)$ and $d \in \N$.}$$ In particular, the integration problem over $F_d$ suffers from the curse of dimensionality.
\end{thm}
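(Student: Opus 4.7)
The plan is to reduce Theorem~\ref{thm2} to Theorem~\ref{thm1} by showing that, under the $q$-property, property~\textbf{D4} is automatic once \textbf{D1}--\textbf{D3} are assumed. Once that implication is in hand, the conclusions (the lower bound on $e(N,d)$, on $N(\varepsilon,d)$, and the curse of dimensionality) follow verbatim from Theorem~\ref{thm1}.

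To carry out the reduction, I would introduce, for each $\uu \subseteq [d]$, the product
\begin{equation*}
f_\uu(\bsx) \;:=\; \prod_{j \in \uu} h_{1,(0)}(x_j) \prod_{j \in [d] \setminus \uu} h_{1,(1)}(x_j).
\end{equation*}
By property~\textbf{D2} the support of $f_\uu$ lies in the quadrant $\bigotimes_{j=1}^d E_{j,\uu}$, where $E_{j,\uu}=D_{(0)}$ if $j \in \uu$ and $E_{j,\uu}=D_{(1)}$ otherwise. For any two distinct subsets $\uu,\vv \subseteq [d]$ one can find an index $j$ with $E_{j,\uu}\neq E_{j,\vv}$, and these two coordinate halves of $D_1$ meet only at the single point $a$. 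Hence the family $\{f_\uu\}_{\uu\subseteq [d]}$ has pairwise (essentially) disjoint supports.

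Next I would extend the $q$-property from pairs to arbitrary finite collections by induction on $|\mathcal{D}|$, which is straightforward since each added summand has support disjoint from the union of the previous supports. For $q\in[1,\infty)$ and any $\mathcal{D}\subseteq 2^{[d]}$ this yields
\begin{equation*}
\Big\|\sum_{\uu\in\mathcal{D}} f_\uu\Big\|_d^{\,q}=\sum_{\uu\in\mathcal{D}}\|f_\uu\|_d^{\,q}\;\le\;\sum_{\uu\subseteq[d]}\|f_\uu\|_d^{\,q}=\Big\|\sum_{\uu\subseteq[d]} f_\uu\Big\|_d^{\,q},
\end{equation*}
and for $q=\infty$ the same monotonicity holds with the maximum in place of the sum. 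Taking $q$-th roots gives exactly~\textbf{D4}, and invoking Theorem~\ref{thm1} completes the argument.

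The only delicate point I foresee is the boundary behavior at $x_j=a$: the quadrants $\bigotimes_j E_{j,\uu}$ can meet on the lower-dimensional hyperplanes $\{x_j=a\}$, so ``disjoint support'' must be understood modulo such negligible sets. For the usual norms satisfying \eqref{qprop} or \eqref{infty:prop} this is harmless, but to be safe one can, without changing any of $I_1(h_{1,(0)})$, $I_1(h_{1,(1)})$, $\|h_1\|_1$, or the inductive step, modify the decomposition at the single point $a$ (e.g.\ set $h_{1,(1)}(a)=0$) so that the supports become genuinely disjoint in the pointwise sense. This is the main technical subtlety; the remainder of the proof is a direct appeal to Theorem~\ref{thm1}.
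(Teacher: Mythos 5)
Your proposal is correct and follows essentially the same route as the paper: the paper's proof likewise observes that the summands $\prod_{j\in\uu}h_{1,(0)}(x_j)\prod_{j\in[d]\setminus\uu}h_{1,(1)}(x_j)$ have pairwise disjoint supports and uses the $q$-property to get $\|g_d\|_d^q=\sum_{\uu\in\mathcal{D}^*}\|\cdot\|_d^q\le\sum_{\uu\subseteq[d]}\|\cdot\|_d^q=\|h_d\|_d^q$, then defers to the proof of Theorem~\ref{thm1}. Your remark on the overlap of the quadrants along the hyperplanes $\{x_j=a\}$ is a point the paper passes over silently, and your way of handling it is sound.
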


\begin{proof}
The result follows directly from the proof of Theorem~\ref{thm1}. Define the fooling function $g_d$ like in \eqref{def:fool}. Then it is obvious that the terms $$\prod_{j \in \uu} h_{1,(0)}(x_j) \prod_{j \in [d] \setminus \uu} h_{1,(1)}(x_j) \quad \mbox{for $\uu \subseteq [d]$}$$ have pairwise a disjoint support. From the $q$-property of the norm we obtain   
\begin{align*}
\|g_d\|_d^q = & \sum_{\uu \in \mathcal{D}^*}  \left\| \prod_{j \in \uu} h_{1,(0)}(x_j) \prod_{j \in [d] \setminus \uu} h_{1,(1)}(x_j)\right\|_d^q\\
\le & \sum_{\uu \subseteq [d]}  \left\| \prod_{j \in \uu} h_{1,(0)}(x_j) \prod_{j \in [d] \setminus \uu} h_{1,(1)}(x_j)\right\|_d^q\\
= & \|h_d\|_d^q,
\end{align*}
when $q$ is finite, and similarly for $q=\infty$. Hence, we have $\|g_d\|_d \le \|h_d\|_d$. The remainder of the proof is the same as the proof of Theorem~\ref{thm1}.
\end{proof}

\subsection{Worst-case functions with a decomposable part}\label{sec:meth2}

Decomposability of a worst-case function is a rather strong assumption. However, in some cases one has a more relaxed situation in which a worst-case function has at least an additive part, which is decomposable in the previous sense. In more detail, assume now that a worst-case function $h_1$ exists and can be decomposed in the form 
\begin{equation}\label{dech:part}
h_1(x)=h_{1,1}(x)+h_{1,2,(0)}(x)+h_{1,2,(1)}(x)
\end{equation}
with
\begin{enumerate}
\item[{\bf DP1}] $h_{1,1}, h_{1,2,(0)}, h_{1,2,(1)} \in F_1$;
\item[{\bf DP2}] there exists $a \in \R$ such that $$D_{(0)}=\{x \in D_1 \ : \ x \le a\} \quad \mbox{and}\quad D_{(1)}=\{x \in D_1 \ : \ x \ge a\}$$ are non-empty and the support of $h_{1,2,(0)}$ is contained in $D_{(0)}$ and the support of $h_{1,2,(1)}$ is contained in $D_{(1)}$; 
\item[{\bf DP3}] the integrals $I_1(h_{1,2,(0)})$ and $I_1(h_{1,2,(1)})$ are positive; and
\item[{\bf DP4}] for every $d \in \N$, every $\uu \subseteq [d]$ and every family $\mathcal{D}_{\uu}$ of subsets of $\uu$ we have 
\begin{eqnarray*}
\lefteqn{\left\| \sum_{\uu \subseteq [d]} \prod_{j \not\in \uu} h_{1,1}(x_j) \sum_{\vv \in \mathcal{D}_{\uu}}  \prod_{j \in \vv} h_{1,2,(0)}(x_j) \prod_{j \in \uu \setminus \vv} h_{1,2,(1)}(x_j) \right\|_d}\\
& \le & \left\| \sum_{\uu \subseteq [d]} \prod_{j \not\in \uu} h_{1,1}(x_j) \sum_{\vv \subseteq \uu}\prod_{j \in \vv} h_{1,2,(0)}(x_j) \prod_{j \in \uu \setminus \vv} h_{1,2,(1)}(x_j) \right\|_d.
\end{eqnarray*}
\end{enumerate}

\begin{remark}\rm
For $d \in \mathbb{N}$ and $\bsx=(x_1,\ldots,x_d) \in D_d$ we have 
\begin{align*}
h_d(\bsx)  := & \prod_{j=1}^d h_1(x_j) \\
 = & \prod_{j=1}^d(h_{1,1}(x_j)+h_{1,2,(0)}(x_j)+h_{1,2,(1)}(x_j))\\
 = & \sum_{\uu \subseteq [d]} \prod_{j \not\in \uu} h_{1,1}(x_j) \sum_{\vv \subseteq \uu}\prod_{j \in \vv} h_{1,2,(0)}(x_j) \prod_{j \in \uu \setminus \vv} h_{1,2,(1)}(x_j)
\end{align*}
and, in particular, $$\left\| \sum_{\uu \subseteq [d]} \prod_{j \not\in \uu} h_{1,1}(x_j) \sum_{\vv \subseteq \uu}\prod_{j \in \vv} h_{1,2,(0)}(x_j) \prod_{j \in \uu \setminus \vv} h_{1,2,(1)}(x_j) \right\|_d = \|h_d\|_d=\|h_1\|_1^d.$$ 
\end{remark}

\begin{thm}\label{thm3}
Let $(F_1,\|\cdot\|_1)$ be a normed space of univariate functions over $D_1 \subseteq \R$ and let for $d \in \N$, $(F_d,\|\cdot\|_d)$ be the $d$-fold tensor product space equipped with a crossnorm $\|\cdot\|_d$. Assume in $F_1$ exists a worst-case function $h_1$ which can be decomposed according to \eqref{dech:part} satisfying properties~{\bf DP1}-{\bf DP4}. Then for the $N$-th minimal integration error in $F_d$ we have 
\begin{equation}\label{thm2:a1}
e(N,d) \ge \frac{\alpha_1^d}{\|h_d\|_d} \sum_{k=0}^d {d \choose k} \alpha_3^{k} \left(1-N \alpha^k\right)_+,
\end{equation}
where 
\begin{equation}\label{def2:al}
\alpha:= \frac{\max(I_1(h_{1,2,(0)}),I_1(h_{1,2,(1)}))}{I_1(h_{1,2,(0)}) + I_1(h_{1,2,(1)})} \in \left[\frac{1}{2},1\right),
\end{equation}
and 
\begin{equation}\label{def2:al123}
\alpha_1:=I_1(h_{1,1}),\quad \alpha_2:=I_1(h_{1,2,(0)}) + I_1(h_{1,2,(1)})\quad \mbox{and}\quad \alpha_3:=\frac{\alpha_2}{\alpha_1}>0.
\end{equation}

Furthermore, we have 
\begin{equation}\label{thm2:lim}
\liminf_{d \rightarrow \infty} \frac{e(\lfloor C^d\rfloor,d)}{e(0,d)} =1 \quad \mbox{for all } C \in (1,\alpha^{-\alpha_3/(1+\alpha_3)})
\end{equation}
and for all $\varepsilon \in (0,1)$ there exists a $d_0(\varepsilon)\in \N$ such that  $$N(\varepsilon,d) \ge \lfloor C^d \rfloor \quad \mbox{for all $d \ge d_0(\varepsilon)$}$$ with $C$ like in \eqref{thm2:lim}. In particular, the integration problem over $F_d$ suffers from the curse of dimensionality.
\end{thm}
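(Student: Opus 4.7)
The plan is to imitate the proof of Theorem~\ref{thm1}, but now the worst-case function carries a non-decomposable part $h_{1,1}$, so I need a two-layer fooling function that only shrinks the contributions coming from $h_{1,2,(0)}$ and $h_{1,2,(1)}$. Fix $N$ nodes $\cP=\{\bsx_1,\ldots,\bsx_N\}\subseteq D_d$. For each $\uu\subseteq[d]$ and each $\vv\subseteq\uu$, the tensor summand
\begin{equation*}
\prod_{j\notin\uu}h_{1,1}(x_j)\prod_{j\in\vv}h_{1,2,(0)}(x_j)\prod_{j\in\uu\setminus\vv}h_{1,2,(1)}(x_j)
\end{equation*}
has its $\uu$-coordinates supported in $\bigotimes_{j\in\uu}E_{j,\vv}$, with $E_{j,\vv}=D_{(0)}$ if $j\in\vv$ and $E_{j,\vv}=D_{(1)}$ if $j\in\uu\setminus\vv$. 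Let $\mathcal{D}^{*}_\uu$ consist of those $\vv\subseteq\uu$ whose quadrant avoids the projection of every node, and set
\begin{equation*}
g_d(\bsx)=\sum_{\uu\subseteq[d]}\prod_{j\notin\uu}h_{1,1}(x_j)\sum_{\vv\in\mathcal{D}^{*}_\uu}\prod_{j\in\vv}h_{1,2,(0)}(x_j)\prod_{j\in\uu\setminus\vv}h_{1,2,(1)}(x_j).
\end{equation*}
Property~\textbf{DP2} forces $g_d(\bsx_k)=0$ for every $k$, and \textbf{DP4} gives $\|g_d\|_d\le\|h_d\|_d$, so $\widetilde g_d:=g_d/\|h_d\|_d$ is an admissible fooling function.

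Next I would estimate $I_d(\widetilde g_d)$. For each $\uu$ and each node $\bsx_k$ the coordinates $(x_{k,j})_{j\in\uu}$ determine a unique quadrant $\bigotimes_{j\in\uu}E_{j,\vv}$ (up to harmless boundary ties at $x_{k,j}=a$), so at most $N$ subsets $\vv\subseteq\uu$ are excluded from $\mathcal{D}^{*}_\uu$. Bounding the missing terms by the largest one, namely $(\max\{I_1(h_{1,2,(0)}),I_1(h_{1,2,(1)})\})^{|\uu|}=(\alpha\alpha_2)^{|\uu|}$, yields
\begin{equation*}
\sum_{\vv\in\mathcal{D}^{*}_\uu}I_1(h_{1,2,(0)})^{|\vv|}I_1(h_{1,2,(1)})^{|\uu|-|\vv|}\ge\alpha_2^{|\uu|}\bigl(1-N\alpha^{|\uu|}\bigr)_+,
\end{equation*}
and, after collecting $\uu$'s of equal cardinality and writing $\alpha_2=\alpha_3\alpha_1$, the desired bound \eqref{thm2:a1} drops out.

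The decisive step is the asymptotic \eqref{thm2:lim}. Because $e(0,d)=(I_1(h_1))^d/\|h_d\|_d=\alpha_1^d(1+\alpha_3)^d/\|h_d\|_d$, the normalised version of \eqref{thm2:a1} reads
\begin{equation*}
\frac{e(N,d)}{e(0,d)}\ge\E\bigl[(1-N\alpha^{X})_+\bigr],\qquad X\sim\operatorname{Bin}(d,p),\ p:=\frac{\alpha_3}{1+\alpha_3}\in(0,1),
\end{equation*}
since $\binom{d}{k}\alpha_3^k/(1+\alpha_3)^d=\binom{d}{k}p^k(1-p)^{d-k}$ is precisely the binomial mass at $k$. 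Setting $N=\lfloor C^d\rfloor$ with $1<C<\alpha^{-p}$ and choosing $\delta>0$ so small that $C\alpha^{p-\delta}<1$, on the event $A_d:=\{X\ge(p-\delta)d\}$ one has $N\alpha^X\le(C\alpha^{p-\delta})^d\to 0$, and $\P(A_d)\to 1$ by the weak law of large numbers (or a Chernoff estimate). Since the integrand lies in $[0,1]$, this already gives $\E[(1-N\alpha^X)_+]\ge\P(A_d)\bigl(1-(C\alpha^{p-\delta})^d\bigr)\to 1$, and combined with the trivial bound $e(N,d)\le e(0,d)$ it yields \eqref{thm2:lim}. The lower bound on $N(\varepsilon,d)$ is then immediate: for any $\varepsilon\in(0,1)$ the normalised error exceeds $\varepsilon$ for all sufficiently large $d$, so $\lfloor C^d\rfloor$ nodes cannot suffice; the curse of dimensionality follows because $C>1$.

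I expect the principal obstacle to be the asymptotic step rather than the construction of $g_d$: the fooling-function bookkeeping parallels Theorem~\ref{thm1} almost verbatim, but the genuinely new idea is to recognise the weighted sum over $\uu$ as a binomial expectation. This reformulation is what pins down the threshold $\alpha^{-\alpha_3/(1+\alpha_3)}$ as $\alpha$ raised to minus the mean of the underlying binomial, and thereby explains the otherwise opaque exponent in the theorem. Once this probabilistic picture is in place, the concentration argument is a single routine application of the law of large numbers.
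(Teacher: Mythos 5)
Your proposal is correct and follows essentially the same route as the paper: the same two-layer fooling function $g_d$ built from the quadrants avoided by the projected nodes, the same counting argument giving \eqref{thm2:a1}, and the same tail estimate for the binomial sum in the asymptotic step. Your only (cosmetic) deviation is to phrase that last step as a law-of-large-numbers statement for $X\sim\operatorname{Bin}(d,\alpha_3/(1+\alpha_3))$, which is exactly the fact the paper imports from \cite[p.~185]{NW10} (namely that $\alpha(d)\to 0$), so your version is a self-contained rederivation of the same estimate rather than a different argument.
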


\begin{proof}
Let $N \in \mathbb{N}$ and let $\mathcal{P}=\{\bsx_1,\bsx_2,\ldots,\bsx_N\}$ be a set of $N$ quadrature nodes in $D_d$. For $\uu \subseteq [d]$ let $\bsx_{k,\uu}$ denote the projection of $\bsx_k$ to the coordinates $j$ in $\uu$. For this specific $\mathcal{P}$ define the ``fooling function''
\begin{equation*}
g_d(\bsx)=\sum_{\uu \subseteq [d]} \prod_{j \not\in \uu} h_{1,1}(x_j) \sum_{\vv \in \mathcal{D}^*_{\uu}}  \prod_{j \in \vv} h_{1,2,(0)}(x_j) \prod_{j \in \uu \setminus \vv} h_{1,2,(1)}(x_j),
\end{equation*}
where, for every $\uu \subseteq [d]$, the set $\mathcal{D}^{\ast}_{\uu}=\mathcal{D}^{\ast}_{\uu}(\cP)$ is the set of all subsets $\vv$ of $\uu$ with the property that for all $k \in \{1,2,\ldots,N\}$ it holds that 
\begin{equation}\label{condsup2}
\bsx_{k,\uu} \not \in  \bigotimes_{j \in \uu} E_{j,\vv},
\end{equation}
where 
$$E_{j,\vv}=\left\{ 
\begin{array}{ll}
D_{(0)} & \mbox{if $j \in \vv$,}\\
D_{(1)} & \mbox{if $j \in \uu \setminus \vv$.}
\end{array}
\right.
$$
Note that $\bigotimes_{j \in \uu} E_{j,\vv}$ contains the support of $\prod_{j \in \vv} h_{1,2,(0)}(x_j) \prod_{j \in \uu \setminus \vv} h_{1,2,(1)}(x_j).$ Hence, condition \eqref{condsup2} means that every point from the set $\mathcal{P}$ is outside the support of $\prod_{j \not\in \uu} h_{1,1}(x_j)\prod_{j \in \vv} h_{1,2,(0)}(x_j) \prod_{j \in \uu \setminus \vv} h_{1,2,(1)}(x_j)$ and this guarantees that 
$$g_d(\bsx_k)=0 \quad \mbox{for all $k \in \{1,2,\ldots,N\}$.}$$ 
Observe that the fooling function $g_d$ is an element of the $3^d$-dimensional 
(tensor product) subspace generated, for $d=1$, by the three functions 
$h_{1,1}$, $h_{1,2,(0)}$ and $h_{1,2,(1)}$. Formally, we prove the lower bound for this 
finite-dimensional subspace.

Since $g_d(\bsx)$ constitutes a partial sum of $h_d(\bsx)$ it follows from property~{\bf DP4} that
\begin{equation*}
\|g_d\|_d \le \|h_d\|_d.
\end{equation*}
Set $\widetilde{g}_d(\bsx):=\|h_d\|_d^{-1} g_d(\bsx)$ such that $\|\widetilde{g}_d\|_d \le 1$. 

Since $\widetilde{g}_d(\bsx_k)=0$ for all $k \in \{1,2,\ldots,N\}$ we have for any linear algorithm \eqref{def:linAlg} that is based on $\cP$ that
\begin{align}\label{err_est1}
e(F_d,A_{d,N}) \ge & I_d(\widetilde{g}_d)\nonumber\\
 = & \frac{1}{\|h_d\|_d} \sum_{\uu \subseteq [d]} (I_1(h_{1,1}))^{d-|\uu|} \nonumber \\
 & \hspace{1.8cm}\times \sum_{\vv \in \mathcal{D}^{*}_{\uu}}  (I_1(h_{1,2,(0)}))^{|\vv|} (I_1(h_{1,2,(1)}))^{|\uu|-|\vv|}.
\end{align}

Now, for fixed $\uu \subseteq [d]$, we estimate the inner-most sum in \eqref{err_est1}. Due to the construction of $g_d$, at most $N$ of the $2^{|\uu|}$ sets $$\bigotimes_{j \in \uu}E_{j,\vv} \quad \mbox{with $\vv \subseteq \uu$}$$ can contain a point from the node set $\mathcal{P}$. This implies that 
\begin{eqnarray}\label{eq:I1I1u}
\lefteqn{\sum_{\vv \in \mathcal{D}^{*}_{\uu}} (I_1(h_{1,2,(0)}))^{|\vv|} (I_1(h_{1,2,(1)}))^{|\uu|-|\vv|}}\\
 & \ge & \sum_{\vv \subseteq \uu} (I_1(h_{1,2,(0)}))^{|\vv|} (I_1(h_{1,2,(1)}))^{|\uu|-|\vv|} - N \max_{\vv \subseteq \uu} (I_1(h_{1,2,(0)}))^{|\vv|} (I_1(h_{1,2,(1)}))^{|\uu|-|\vv|}\nonumber \\
 & = & \left(I_1(h_{1,2,(0)}) + I_1(h_{1,2,(1)})\right)^{|\uu|} - N \left( \max(I_1(h_{1,2,(0)}),I_1(h_{1,2,(1)}))\right)^{|\uu|}\nonumber \\
 & = &  \alpha_2^{|\uu|} (1- N \alpha^{|\uu|}), \nonumber 
\end{eqnarray}
with $\alpha$ from \eqref{def2:al} (note that $\alpha \in [\tfrac{1}{2},1)$ because of {\bf DP3}) and $\alpha_2$ from \eqref{def2:al123}.  Since, obviously, \eqref{eq:I1I1u} is non-negative, we can re-write the lower bound as 
$$
\sum_{\vv \in \mathcal{D}^{*}_{\uu}} (I_1(h_{1,2,(0)}))^{|\vv|} (I_1(h_{1,2,(1)}))^{|\uu|-|\vv|}\ge \alpha_2^{|\uu|} (1- N \alpha^{|\uu|})_+. 
$$
Inserting into \eqref{err_est1}, thereby using the notation $\alpha_1$ and $\alpha_3$ from \eqref{def2:al123} we obtain
\begin{equation*}
e(N,d) \ge e(F_d,A_{d,N}) \ge \frac{\alpha_1^d}{\|h_d\|_d}  \sum_{k=0}^d {d \choose k} \alpha_3^{k} (1- N \alpha^k)_+
\end{equation*}
and this finishes the proof of \eqref{thm2:a1}.

For the normalized $N$-th minimal error we therefore obtain
\begin{equation}\label{lb:ne}
\frac{e(N,d)}{e(0,d)} \ge \left(\frac{\alpha_1}{I_1(h_1)}\right)^d  \sum_{k=0}^d {d \choose k} \alpha_3^{k} (1-N \alpha^k)_+,
\end{equation}
where we used that $\|h_d\|_d \, e(0,d)=\left(\|h_1\|_1 \, e(0,1)\right)^d =(I_1(h_1))^d$.

Now we proceed using ideas from \cite[p.~185]{NW10}. Let $b \in (0,1)$ and let $N =\lfloor C^d\rfloor$ with $C \in (1,\alpha^{-\alpha_3/(1+\alpha_3)})$. This means that $C \alpha^{\alpha_3/(1+\alpha_3)} < 1$. Then there exists a positive $c$ such that $$c < \frac{\alpha_3}{1+\alpha_3} \quad \mbox{ and }\quad C \alpha^c<1.$$ Put $k(d):= \lfloor c d\rfloor$. For sufficiently large $d$ we have 
\begin{equation}\label{eq:b}
N \alpha^k \le C^d \alpha^{cd-1}=\alpha^{-1} (C \alpha^c)^d \le b \quad \mbox{ for all $k \in (k(d),d]$.}
\end{equation}
Put further $C_{d,k}:={d \choose k} \alpha_3^k$. Then \eqref{lb:ne} and \eqref{eq:b} imply that
\begin{align*}
\frac{e(\lfloor C^d\rfloor,d)}{e(0,d)} \ge & \left(\frac{\alpha_1}{I_1(h_1)}\right)^d \sum_{k=k(d)+1 }^d C_{d,k} (1-N \alpha^k)_+\\
\ge &  (1-b)\left(\frac{\alpha_1}{I_1(h_1)}\right)^d \sum_{k=k(d)+1 }^d C_{d,k} \\
= &  (1-b) \left(\frac{\alpha_1}{I_1(h_1)}\right)^d \left(\sum_{k=0}^d C_{d,k} - \sum_{k=0}^{k(d)} C_{d,k} \right).
\end{align*}
Note that $$\sum_{k=0}^d C_{d,k} = (1+\alpha_3)^d =\frac{(\alpha_1+\alpha_2)^d}{\alpha_1^d}=\left(\frac{I_1(h_1)}{\alpha_1}\right)^d.$$ 
This and $e(0,1)=I_1(h_1)/\|h_1\|_1$ yields
\begin{align*}
\frac{e(\lfloor C^d\rfloor,d)}{e(0,d)} \ge &  (1-b)  \left(1 -\frac{\sum_{k=0}^{k(d)} C_{d,k}}{(1+\alpha_3)^d} \right).
\end{align*} 

Now put $$\alpha(d):=\frac{\sum_{k=0}^{k(d)} C_{d,k}}{(1+\alpha_3)^d}.$$  It is shown in \cite[p.~185]{NW10} that $\alpha(d)$ tends to zero when $d \rightarrow \infty$.  Hence, for any positive $\delta$ we can find an integer $d(\delta)$ such that for all $d \ge d(\delta)$ we have $$1 \ge  \frac{e(\lfloor C^d\rfloor,d)}{e(0,d)} \ge (1-b)(1-\delta).$$ Since $b$ and $\delta$ can be arbitrarily close to zero, it follows that $$\liminf_{d \rightarrow \infty} \frac{e(\lfloor C^d\rfloor,d)}{e(0,d)} = 1, $$ and this holds true for all $C \in (1,\alpha^{-\alpha_3/(1+\alpha_3)})$. This finishes the proof of \eqref{thm2:lim}.

Finally, take $\varepsilon \in (0,1)$. Then there exists a $d_0(\varepsilon) \in \N$ such that for all $d \ge d_0(\varepsilon)$ it holds that $$\varepsilon \le \frac{e(\lfloor C^d\rfloor,d)}{e(0,d)}$$ and therefore $$N(\varepsilon,d) \ge \lfloor C^d \rfloor \quad \mbox{for all $d\ge d_0(\varepsilon)$}.$$ Since $C>1$ this means the curse of dimensionality. 
\end{proof}

\section{Applications: Part 1}\label{sec:exa}

An important application of the method of decomposable worst-case functions can be found in \cite{NP23} where we proved that the classical $L_p$-star discrepancy suffers from the curse of dimensionality for a sequence of parameters $p$ in $(1,2]$. By nature, there the most difficult part is a verification of property~{\bf DP4}. In this section we present further concrete applications of the method. In the first application we show that the presented method is essentially a generalization of the ``method of decomposable kernels''.

\subsection{The method of decomposable kernels revisited} Let $q=2$ and assume that $F_1$ is a reproducing kernel Hilbert space of absolutely integrable functions over $D_1$ with inner product $\langle \cdot , \cdot \rangle_1$ and norm $\|\cdot\|_1 = \langle \cdot , \cdot \rangle_1^{1/2}$. Assume that the kernel $K_1$ is decomposable, i.e., there exists an element $a \in \R$ such that the sets $$D_{(0)} :=\{x \in D_1 \ : \ x \le a\}\quad \mbox{and}\quad D_{(1)} := \{x \in D_1 \ : \ x \ge a\}$$
are nonempty and $$K_1(x,t)=0 \quad \mbox{for $(x,t) \in D_{(0)}\times D_{(1)} \cup D_{(1)}\times D_{(0)}$.}$$ Compare with \cite[Section~11.4]{NW10}. 

The $d$-fold tensor product space $F_d$ is again a reproducing kernel Hilbert space with kernel $K_d$, inner product $\langle \cdot , \cdot \rangle_d$ and norm $\|\cdot \|_d= \langle \cdot , \cdot \rangle_d^{1/2}$. It is well known that $K_d(\bsx,\bsy)= \prod_{j=1}^d K_1(x_j,y_j)$ for $\bsx=(x_1,\ldots,x_d)$ and $\bsy=(y_1,,\ldots,y_d)$ in $D_d:=D_1^d$. Furthermore, for elementary tensors $f(\bsx)=f_1(x_1)\cdots f_d(x_d)$ and $g(\bsx)=g_1(x_1)\cdots g_d(x_d)$ we have 
\begin{equation}\label{innprod:eltens}
\langle f,g \rangle_d=\prod_{j=1}^d \langle f_j,g_j \rangle_1.
\end{equation}
See \cite{Aron50} for an introduction into the theory of reproducing kernel Hilbert spaces.

It is well known that the worst-case function of the integration problem in $F_1$ is given by $$h_1(x)=\frac{\int_{D_1} K_1(x,t) \rd t}{\left\|\int_{D_1} K_1(\cdot,t) \rd t\right\|_1}.$$
Now we use the decomposition $$h_{1,(0)}(x)=\frac{\int_{D_{(0)}} K_1(x,t) \rd t}{\left\|\int_{D_1} K_1(\cdot,t) \rd t\right\|_1}\quad\mbox{and}\quad h_{1,(1)}(x)=\frac{\int_{D_{(1)}} K_1(x,t) \rd t}{\left\|\int_{D_1} K_1(\cdot,t) \rd t\right\|_1}.$$ Then, obviously $h_1(x)=h_{1,(0)}(x)+h_{1,(1)}(x)$ where $h_{1,(0)},h_{1,(1)} \in F_1$ and ${\rm supp}\, h_{1,(0)} \subseteq D_{(0)}$ and ${\rm supp}\, h_{1,(1)} \subseteq D_{(1)}$. We assume that $I_1(h_{1,(0)})>0$ and $I_1(h_{1,(1)})>0$ such that our decomposition satisfies properties~{\bf D1}-{\bf D3}. 

Finally, $h_{1,(0)}$ and $h_{1,(1)}$ are orthogonal, because 
\begin{align*}
\left\langle \int_{D_{(0)}} K_1(\cdot,t)\rd t, \int_{D_{(1)}} K_1(\cdot,s)\rd s\right\rangle_1  = & \int_{D_{(0)}} \int_{D_{(1)}}  \langle K_1(\cdot,t),K_1(\cdot,s)\rangle_1 \rd t\rd s\\
= & \int_{D_{(0)}} \int_{D_{(1)}}  K_1(s,t) \rd t\rd s = 0.
\end{align*}
Then for every family $\mathcal{D}$ of subsets of $[d]$ we have
\begin{align*}
\lefteqn{\left\|\sum_{\uu \in \mathcal{D}} \prod_{j \in \uu} h_{1,(0)}(x_j) \prod_{j \in [d] \setminus \uu} h_{1,(1)}(x_j)\right\|_{d}^2}\\
&=   \left\langle \sum_{\uu \in \mathcal{D}} \prod_{j \in \uu} h_{1,(0)}(x_j) \prod_{j \in [d] \setminus \uu} h_{1,(1)}(x_j), \sum_{\vv \in \mathcal{D}} \prod_{j \in \vv} h_{1,(0)}(x_j) \prod_{j \in [d] \setminus \vv} h_{1,(1)}(x_j) \right\rangle_d\\
& =  \sum_{\uu \in \mathcal{D}}\sum_{\vv \in \mathcal{D}} \left\langle \prod_{j \in \uu} h_{1,(0)}(x_j) \prod_{j \in [d] \setminus \uu} h_{1,(1)}(x_j),  \prod_{j \in \vv} h_{1,(0)}(x_j) \prod_{j \in [d] \setminus \vv} h_{1,(1)}(x_j) \right\rangle_d\\
&= \sum_{\uu \in \mathcal{D}} \left\langle \prod_{j \in \uu} h_{1,(0)}(x_j) \prod_{j \in [d] \setminus \uu} h_{1,(1)}(x_j),  \prod_{j \in \uu} h_{1,(0)}(x_j) \prod_{j \in [d] \setminus \uu} h_{1,(1)}(x_j) \right\rangle_d\\
& =   \sum_{\uu \in \mathcal{D}} \left\|\prod_{j \in \uu} h_{1,(0)}(x_j) \prod_{j \in [d] \setminus \uu} h_{1,(1)}(x_j)\right\|_d^2\\
& \le \sum_{\uu \subseteq [d]}  \left\|\prod_{j \in \uu} h_{1,(0)}(x_j) \prod_{j \in [d] \setminus \uu} h_{1,(1)}(x_j)\right\|_d^2
\end{align*}
where we used the orthogonality of $h_{1,(0)}$ and $h_{1,(1)}$ and the property \eqref{innprod:eltens} of the inner product for elementary tensors. This shows that also property~{\bf D4} is satisfied.
So we meet all properties~{\bf D1}-{\bf D4} and hence, according to Theorem~\ref{thm1} we have  
$$e(N,d) \ge (1-N \alpha^d)_+ \, e(0,d),$$ where $$\alpha:= \frac{\max(I_1(h_{1,(0)}),I_1(h_{1,(1)}))}{I_1(h_{1,(0)}) + I_1(h_{1,(1)})} \in \left[\frac{1}{2},1\right),$$ and $$N(\varepsilon,d) \ge (1-\varepsilon) \left(\frac{1}{\alpha}\right)^d \quad \mbox{for all $\varepsilon \in (0,1)$ and $d \in \N$,}$$ such that the integration problem over $F_d$ suffers from the curse of dimensionality.

\subsection{Uniform integration of functions of smoothness $r$}\label{susec:3.2}
For $q \in (1,\infty]$,  $r \in \N$, and $a \in (0,1)$ let 
\begin{align*}
F_1=W_{a,q}^{r}([0,1])=\{f:[0,1] \rightarrow \R \ : \ & f^{(j)}(a)=0\, \mbox{ for $0 \le j <r$},\\
& f^{(r-1)}\, \mbox{ abs. cont. and }\, f^{(r)} \in L_q([0,1])\}.
\end{align*}
As norm on $F_1$ we use $$\|f\|_{1,q}:=\|f^{(r)}\|_{L_q}.$$ 
We study integration in the $d$-fold tensor product space $F_d=W_{a,q}^{(r,r,\ldots,r)}([0,1]^d)$, equipped with the crossnorm $\|f\|_{d,q}:=\|f^{(r,r,\ldots,r)}\|_{L_q}$. For $q=2$ and $r=1$ it is shown in \cite[Sec.~11.4.2, pp.~173ff]{NW10} that the integration problem suffers from the curse of dimensionality. With our method we are now able to tackle the general case $q \in (1,\infty]$ and $r \in \N$.

First consider $q \in (1,\infty)$ and observe that the norm $\|\cdot\|_{1,q}$ as well as the crossnorm $\|\cdot\|_{d,q}$ satisfy the $q$-property \eqref{qprop} with the same parameter $q$ like in the definition of the norm. That is, we aim to apply Theorem~\ref{thm2}.

Next we determine the worst-case function. For $f \in F_1$ and for $x \in [0,1]$ we have the representation
\begin{equation*}
f(x)=\int_a^x f^{(r)}(t) \frac{(x-t)^{r-1}}{(r-1)!} \rd t.
\end{equation*}
Using this representation it is elementary to prove that 
$$\int_0^1 f(x) \rd x =  \int_0^1 f^{(r)}(t) g(t) \rd t,$$ where 
$$g(t)=\left\{ 
\begin{array}{rl}
(-t)^r/r! & \mbox{if $t \in [0,a)$,}\\
(1-t)^r/r! &\mbox{if $t \in [a,1]$.}
\end{array}\right.
$$
Applying H\"older's inequality we obtain 
$$\left|\int_0^1 f(x) \rd x \right| \le \|f^{(r)}\|_{L_q} \|g\|_{L_p}=\|f\|_{1,q} \|g\|_{L_p},$$ where $p$ is the H\"older conjugate of $q$, i.e., $1/p+1/q=1$, with equality if $|f^{(r)}(t)|=c |g(t)|^{p-1}$ for some $c >0$. This holds for 
\begin{equation}\label{wc:fct:unifint}
f(t) :=\frac{(-1)^{r+1}}{(r!)^{p-1}}\times \left\{ 
\begin{array}{ll}
\left(\frac{a^{r p}-t^{r p}}{\prod_{i=0}^{r-1}(r p-i)} +\sum_{j=1}^{r-1} \frac{(t-a)^j a^{rp-j}}{j! \prod_{i=j}^{r-1}(r p-i)}\right) & \mbox{if $t \in [0,a)$,}\\[1em]
\left(\frac{(1-a)^{r p}-(1-t)^{r p}}{\prod_{i=0}^{r-1}(r p-i)} + \sum_{j=1}^{r-1} \frac{(t-a)^j (1-a)^{rp-j}}{j! \prod_{i=j}^{r-1}(r p-i)}(-1)^{j}\right) &\mbox{if $t \in [a,1]$.}
\end{array}\right.
\end{equation} 
This means that $h_1:=f$ is the worst-case function (note that also $h_1^{(j)}(a)=0$ for $0 \le j < r$).

We use the decomposition of $h_1$ with $$h_{1,(0)}(x)={\bf 1}_{[0,a]}(x)h_1(x) \quad \mbox{and}\quad h_{1,(1)}(x)={\bf 1}_{[a,1]}(x) h_1(x).$$ In particular, we have $D_1=[0,1]$, $D_{(0)}=[0,a]$ and $D_{(1)}=[a,1]$. Then 
\begin{align*}
I_1( h_{1,(0)})=\frac{(-1)^{r+1} a^{rp+1}}{(r!)^{p-1}\prod_{i=0}^{r-1}(r p-i)}  \left(\frac{rp}{r p+1}+\sum_{j=1}^{r-1} \frac{(-1)^j}{(j+1)!} \prod_{i=0}^{j-1}(r p-i)\right)
\end{align*}
and
\begin{align*}
I_1( h_{1,(1)})= \frac{(-1)^{r+1}(1-a)^{rp+1}}{(r!)^{p-1} \prod_{i=0}^{r-1}(rp-i)}\left(\frac{rp}{rp+1} + \sum_{j=1}^{r-1} \frac{(-1)^j}{(j+1)!} \prod_{i=0}^{j-1}(r p-i)\right).
\end{align*}
It can be shown that 
$$\frac{rp}{r p+1}+\sum_{j=1}^{r-1} \frac{(-1)^j}{(j+1)!} \prod_{i=0}^{j-1}(r p-i) \ \left\{ 
\begin{array}{ll}
>0 & \mbox{if $r$ is odd,}\\
<0 & \mbox{if $r$ is even.} 
\end{array}
\right.
$$
Hence, in any case, we have $$I_1(h_{1,(0)})>0 \quad \mbox{and}\quad I_1(h_{1,(1)})>0.$$

So we meet all required properties {\bf D1}-{\bf D3} and  $$\frac{1}{\alpha}=\frac{I_1(h_{1,(0)}) + I_1(h_{1,(1)})}{\max(I_1(h_{1,(0)}),I_1(h_{1,(1)}))}=1+\left(\frac{1}{\max(a,1-a)}-1\right)^{rp+1}>1$$ and Theorem~\ref{thm2} can be applied.

For the case $q=\infty$ we can argue similarly. Note that the norm $\|\cdot\|_{1,\infty}$ as well as the crossnorm $\|\cdot\|_{d,\infty}$ satisfy the $\infty$-property \eqref{infty:prop} and hence Theorem~\ref{thm2} still can be applied. The H\"older conjugate of $q=\infty$ is $p=1$. A brief consideration shows that the worst-case function for $q=\infty$ is still $h_1=f$ with $f$ from\eqref{wc:fct:unifint} with $p=1$. The remainder works in the same way as for finite $q$.

Now Theorem~\ref{thm2} implies the following result:

\begin{cor}\label{cor1}
For $q\in (1,\infty]$, $r \in \N$ and $a\in (0,1)$ integration in the tensor product space $W_{a,q}^{(r,r,\ldots,r)}([0,1]^d)$ suffers from the curse of dimensionality. In more detail, we have $$N(\varepsilon,d)\ge C_{p,r}^d\, (1-\varepsilon)\quad \mbox{for all $\varepsilon \in (0,1)$ and $d \in \N$,}$$ where $p$ is the H\"older conjugate of $q$ and where 
\begin{equation}\label{def:Cpr}
C_{p,r}=1+\left(\frac{1}{\max(a,1-a)}-1\right)^{r p+1} >1.
\end{equation}
\end{cor}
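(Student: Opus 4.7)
The plan is to apply Theorem~\ref{thm2} directly, since the preceding discussion has essentially assembled all the ingredients: the tensor product norm $\|\cdot\|_{d,q}$ has the $q$-property (or the $\infty$-property when $q=\infty$), the worst-case function $h_1$ is given explicitly by \eqref{wc:fct:unifint}, and the natural split $h_{1,(0)} := {\bf 1}_{[0,a]}\,h_1$, $h_{1,(1)} := {\bf 1}_{[a,1]}\,h_1$ has been proposed. Thus the proof reduces to checking properties~{\bf D1}--{\bf D3} and then reading off $1/\alpha$ from the explicit integral formulas.

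For {\bf D1}, one must confirm that $h_{1,(0)}$ and $h_{1,(1)}$ themselves lie in $F_1=W_{a,q}^{r}([0,1])$. Here the anchor conditions $h_1^{(j)}(a)=0$ for $0\le j<r$ pay off: truncation of $h_1$ at $a$ leaves the first $r-1$ derivatives continuous through $a$ (both one-sided limits vanish), so $h_{1,(0)}^{(r-1)}$ and $h_{1,(1)}^{(r-1)}$ are absolutely continuous on $[0,1]$, and their $r$-th weak derivatives are restrictions of $h_1^{(r)}\in L_q$. The anchor values of the pieces at $a$ are also zero, so they belong to the anchored space. Property~{\bf D2} is immediate from the construction with decomposition point $a$. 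For {\bf D3}, the explicit formulas computed above factor as a sign-alternating constant depending on $r$ and $p$, times $a^{rp+1}$ respectively $(1-a)^{rp+1}$; the sign analysis of the bracketed expression $\frac{rp}{rp+1}+\sum_{j=1}^{r-1}\frac{(-1)^j}{(j+1)!}\prod_{i=0}^{j-1}(rp-i)$ already indicated then shows that both $I_1(h_{1,(0)})$ and $I_1(h_{1,(1)})$ are strictly positive.

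With {\bf D1}--{\bf D3} in hand and the $q$-property (respectively $\infty$-property) of the crossnorm ensuring {\bf D4} automatically, Theorem~\ref{thm2} applies and yields $N(\varepsilon,d)\ge(1-\varepsilon)(1/\alpha)^d$. Since the two integrals share a common constant prefactor, the ratio collapses to
\begin{equation*}
\frac{1}{\alpha}=\frac{a^{rp+1}+(1-a)^{rp+1}}{\max(a,1-a)^{rp+1}}=1+\left(\frac{\min(a,1-a)}{\max(a,1-a)}\right)^{rp+1},
\end{equation*}
and using $\min(a,1-a)+\max(a,1-a)=1$ one rewrites $\min/\max = 1/\max(a,1-a)-1$, matching exactly the constant $C_{p,r}$ in \eqref{def:Cpr}. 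Strictly, $C_{p,r}>1$ because $a\in(0,1)$ forces $\min(a,1-a)>0$. The case $q=\infty$ is handled identically, using the $\infty$-property of the crossnorm and the H\"older conjugate $p=1$.

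I expect the only delicate point to be the verification of {\bf D1} at the anchor point $a$, since one must ensure the truncated functions still sit in the anchored Sobolev space in the sense of both the regularity condition and the vanishing of derivatives up to order $r-1$ at $a$; everything else is algebraic manipulation of formulas already present in the excerpt, combined with the ready-made lower bound of Theorem~\ref{thm2}.
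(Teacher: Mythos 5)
Your proposal is correct and follows essentially the same route as the paper: verify {\bf D1}--{\bf D3} for the split $h_{1,(0)}={\bf 1}_{[0,a]}h_1$, $h_{1,(1)}={\bf 1}_{[a,1]}h_1$, get {\bf D4} for free from the $q$-property (respectively the $\infty$-property when $q=\infty$, $p=1$), and read off $1/\alpha=1+(1/\max(a,1-a)-1)^{rp+1}$ from the explicit integrals before invoking Theorem~\ref{thm2}. Your explicit check that the anchor conditions $h_1^{(j)}(a)=0$ for $0\le j<r$ make the truncations land back in $W^{r}_{a,q}([0,1])$ is a detail the paper leaves implicit, and it is argued correctly.
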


That is, we generalized the example in \cite[Sec.~11.4.2, pp.~173ff]{NW10} from $q=2$ and $r=1$ to general $q \in (1,\infty]$ and $r \in \N$.

\begin{exa}\label{ex:r1}\rm
In view of the next two applications and also in order to facilitate the comparison with the result in \cite[Sec.~11.4.2, pp.~173ff]{NW10} we briefly specify the case of smoothness $r=1$ explicitly. Here we have
$$h_1(x):=\boldsymbol{1}_{[0,a]}(x) \frac{a^p-x^p}{p} + \boldsymbol{1}_{[a,1]}(x) \frac{(1-a)^{p}-(1-x)^{p}}{p},$$ 
with $$I_1(h_1)  = \frac{a^{p+1}+(1-a)^{p+1}}{p+1} \quad \mbox{and} \quad \|h_1\|_{1,q} = \left(\frac{a^{p+1}+(1-a)^{p+1}}{p+1}\right)^{1/q},$$
such that $$e(0,1)=\left(\frac{a^{p+1}+(1-a)^{p+1}}{p+1}\right)^{1/p}.$$
A suitable decomposition of $h_1$ is given 
by  $$h_{1,(0)}(x)={\bf 1}_{[0,a]}(x) \frac{a^{p}-x^{p}}{p} \quad \mbox{and} \quad h_{1,(1)}(x)={\bf 1}_{[a,1]}(x) \frac{(1-a)^{p}-(1-x)^{p}}{p}.$$ Then $$I_1( h_{1,(0)})=\frac{a^{p+1}}{p+1}>0 \quad \mbox{and} \quad I_1( h_{1,(1)})=\frac{(1-a)^{p+1}}{p+1}>0$$ and $$\frac{1}{\alpha}=\frac{I_1(h_{1,(0)}) + I_1(h_{1,(1)})}{\max(I_1(h_{1,(0)}),I_1(h_{1,(1)}))}=1+\left(\frac{1}{\max(a,1-a)}-1\right)^{p+1} >1.$$ For example, for $a=1/2$ we have $1/\alpha=2$.
\end{exa}

\subsection{$L_p$-discrepancy anchored at $a$}\label{subsec3.3}

For a point set $\{\bsy_1,\ldots,\bsy_N\}$ in $[0,1)^d$, for $a\in [0,1]$ and $p \in [1,\infty)$, the $L_p$-discrepancy anchored at $a$ is defined as $$L_p^{\pitchfork,(a)}(\{\bsy_1,\ldots,\bsy_N\}):=\left(\int_{[0,1]^d} \left|\frac{1}{N}\sum_{k=1}^N {\bf 1}_{J(\bst)}(\bsy_k) - {\rm Vol}(J(\bst))\right|^p \rd \bst \right)^{1/p},$$ where, for $\bst=(t_1,\ldots,t_d)$ in $[0,1]^d$, the test set $J(\bst)$ is defined as $$J(\bst) =J(t_1)\times \ldots \times J(t_d)$$ with $J(t)=[\min(t,a),\max(t,a))$ for $t \in [0,1]$ and ${\rm Vol}(\cdot)$ denotes the volume in $\R^d$. See \cite[Section~9.5.3]{NW10} and \cite[Example~1]{HW12}.

Consider integration in the tensor product space $W_{a,q}^{(r,r,\ldots,r)}([0,1]^d)$ from the previous section. For $r=1$ it can be shown that the worst-case error of a quasi-Monte Carlo rule 
\begin{equation}\label{QMC}
A_{d,N}^{{\rm QMC}}(f)=\frac{1}{N}\sum_{k=1}^N f(\bsx_k) \quad \mbox{for $f \in W_{a,q}^{(1,1,\ldots,1)}([0,1]^d)$,}
\end{equation}
which is based on a point set $\{\bsx_1,\ldots,\bsx_N\}$, is exactly the $L_p$-discrepancy anchored at $a$ of the point set $\{\bsy_1,\ldots,\bsy_N\}$, where $\bsy_k:=\bsa-\bsx_k \pmod{1}$ and where $\bsa=(a,\ldots,a)$, i.e., $$e(W_{a,q}^{(1,1,\ldots,1)}([0,1]^d),A_{d,N}^{{\rm QMC}})=L_p^{\pitchfork,(a)}(\{\bsy_1,\ldots,\bsy_N\}),$$ where $1/p+1/q=1$. See \cite[Section~9.5.3]{NW10} and \cite[Sec.~11.4.3]{NW10}. The initial $L_p$-discrepancy anchored at $a$ is $$L_p^{\pitchfork,(a)}(\emptyset)=\left(\frac{a^{p+1}+(1-a)^{p+1}}{p+1}\right)^{d/p}=e(0,d),$$ where the latter is the initial error of integration in $W_{a,q}^{(1,1,\ldots,1)}([0,1]^d)$.

From Corollary~\ref{cor1} we therefore obtain:

\begin{cor}\label{cor2}
For $a \in (0,1)$ and for $p \in [1,\infty)$ the $L_p$-discrepancy anchored at $a$ suffers from the curse of dimensionality, i.e., 
\begin{align*}
\lefteqn{N_{L_p^{\pitchfork, (a)}}(\varepsilon,d)}\\
&:= \min\{N \in \N\ : \ \exists \, \cP \subseteq [0,1)^d \mbox{ s.t. } |\cP|=N \mbox{ and } L_p^{\pitchfork,(a)}(\cP)\le \varepsilon L_p^{\pitchfork,(a)}(\emptyset)\},
\end{align*}
grows exponentially fast with the dimension $d$. More detailed, $$N_{L_p^{\pitchfork,(a)}}(\varepsilon,d) \ge C_{p,1}^d\, (1-\varepsilon) \quad \mbox{for all $\varepsilon \in (0,1)$ and $d \in \N$,}$$ where $C_{p,1}>1$ is like in \eqref{def:Cpr} with $r=1$.
\end{cor}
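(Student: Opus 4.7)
The plan is to reduce Corollary~\ref{cor2} to Corollary~\ref{cor1} in the case $r=1$ via the identification of quasi-Monte Carlo worst-case errors with anchored $L_p$-discrepancies that is recalled immediately before the corollary. The key observation is that the information complexity $N(\varepsilon,d)$ for integration is defined as a minimum over \emph{all} linear algorithms, hence it is bounded above by the worst-case error of any particular QMC rule.

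First, fix $\varepsilon \in (0,1)$ and $d \in \N$, and suppose $\cP = \{\bsy_1,\ldots,\bsy_N\} \subseteq [0,1)^d$ is any configuration whose anchored $L_p$-discrepancy satisfies $L_p^{\pitchfork,(a)}(\cP) \le \varepsilon\, L_p^{\pitchfork,(a)}(\emptyset)$. Let $q \in (1,\infty]$ be the Hölder conjugate of $p \in [1,\infty)$ and form the shifted nodes $\bsx_k := \bsa - \bsy_k \pmod 1$ for $k=1,\ldots,N$. The cited identity $e(W_{a,q}^{(1,1,\ldots,1)}([0,1]^d),A_{d,N}^{{\rm QMC}}) = L_p^{\pitchfork,(a)}(\cP)$ together with the matching identity for the initial errors, $L_p^{\pitchfork,(a)}(\emptyset)=e(0,d)$, then shows that the QMC rule on $\{\bsx_1,\ldots,\bsx_N\}$ has worst-case error at most $\varepsilon\, e(0,d)$ in $W_{a,q}^{(1,1,\ldots,1)}([0,1]^d)$.

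Second, since $e(N,d)$ is the infimum over all linear algorithms using $N$ function values, this specific QMC rule witnesses $e(N,d) \le \varepsilon\, e(0,d)$, and therefore $N \ge N(\varepsilon,d)$. Invoking Corollary~\ref{cor1} with $r=1$ and the given $q$ (whose Hölder conjugate is precisely our $p$) yields $N(\varepsilon,d) \ge C_{p,1}^d\,(1-\varepsilon)$. Chaining these inequalities gives the claimed lower bound on $N_{L_p^{\pitchfork,(a)}}(\varepsilon,d)$, and since $C_{p,1}>1$ whenever $a \in (0,1)$, the exponential growth in $d$ establishes the curse of dimensionality.

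There is essentially no technical obstacle here: all the work has been done in Corollary~\ref{cor1} (which produced the exponential lower bound by the decomposable worst-case function method of Theorem~\ref{thm2}) and in the QMC-discrepancy duality from \cite[Sections~9.5.3 and 11.4.3]{NW10}. The only mild care needed is to verify that the boundary case $p=1$, corresponding to $q=\infty$, is covered by Corollary~\ref{cor1}; it is, since that corollary is stated for $q\in(1,\infty]$, so no separate argument at the endpoint is required.
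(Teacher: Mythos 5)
Your proposal is correct and follows exactly the route the paper intends: transfer the discrepancy bound to the integration problem in $W_{a,q}^{(1,\ldots,1)}([0,1]^d)$ via the QMC--discrepancy identity (using that $\bsy\mapsto\bsa-\bsy\pmod 1$ is its own inverse, so a low-discrepancy set yields a QMC rule witnessing $e(N,d)\le\varepsilon\,e(0,d)$), then invoke Corollary~\ref{cor1} with $r=1$. Your remark that $p=1$ corresponds to $q=\infty$ and is covered by Corollary~\ref{cor1} is also the reason the paper can include the endpoint $p=1$ here.
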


We point out that in this result even the endpoint $p=1$ is included. For $a=0$ the $L_p$-discrepancy anchored in 0 is the same as the usual $L_p$-star discrepancy. This case is not covered by  Corollary~\ref{cor2}. Proving the curse of dimensionality for the $L_p$-star discrepancy for finite $p$ beyond $p=2$ was a long standing open question which was recently solved in \cite{NP24}, however, only for $p \in (1,\infty)$. The quest about the curse of dimensionality for the $L_1$-star discrepancy is still open.

\subsection{$L_p$-quadrant discrepancy at $a$}\label{subsec3.4}

For a point set $\{\bsy_1,\ldots,\bsy_N\}$ in $[0,1)^d$, for $a \in [0,1]$ and $p \in [1,\infty)$, the $L_p$-quadrant discrepancy at $a$ is defined as $$L_p^{\boxplus,(a)}(\{\bsy_1,\ldots,\bsy_N\}):=\left(\int_{[0,1]^d} \left|\frac{1}{N}\sum_{k=1}^N {\bf 1}_{Q(\bst)}(\bsy_k) - {\rm Vol}(Q(\bst))\right|^p \rd \bst \right)^{1/p},$$ where, for $\bst=(t_1,\ldots,t_d)$ in $[0,1]^d$, the test set $Q(\bst)$ is defined as $$Q(\bst) =Q(t_1)\times \ldots \times Q(t_d)$$ with $Q(t)=[0,t)$ if $t<a$, and $Q(t)=[t,1)$ if $t \ge a$. For $a=1/2$ and $p=2$ this discrepancy was studied by Hickernell~\cite{hick98} under the name {\it centered discrepancy}. See also \cite[Section~9.5.4]{NW10}, \cite[Example~2]{HW12}, and \cite{HSW2002}.

As before, consider integration in the tensor product space $W_{a,q}^{(1,1,\ldots,1)}([0,1]^d)$. Then the worst-case error of a quasi-Monte Carlo rule \eqref{QMC} for $f \in W_{a,q}^{(1,1,\ldots,1)}([0,1]^d)$ which is based on a point set $\{\bsx_1,\ldots,\bsx_N\}$, is exactly the $L_p$-quadrant discrepancy at $a$ of the same point set, i.e., $$e(W_{a,q}^{(1,1,\ldots,1)}([0,1]^d),A_{d,N}^{{\rm QMC}})=L_p^{\boxplus,(a)}(\{\bsx_1,\ldots,\bsx_N\}),$$ where $1/p+1/q=1$. See \cite[Sec.~9.5.4]{NW10} (for the case $p=2$). The initial $L_p$-quadrant discrepancy at $a$ is $$L_p^{\boxplus,(a)}(\emptyset)=\left(\frac{a^{p+1}+(1-a)^{p+1}}{p+1}\right)^{d/p}=e(0,d),$$ where the latter is the initial error of integration in $W_{a,q}^{(1,1,\ldots,1)}([0,1]^d)$.

Now we can generalize \cite[Corollary~11.11]{NW10}. From Corollary~\ref{cor1} we obtain:

\begin{cor}\label{cor3}
For $a \in (0,1)$ and for $p \in [1,\infty)$ the $L_p$-quadrant discrepancy at $a$ suffers from the curse of dimensionality, i.e., 
\begin{align*}
\lefteqn{N_{L_p^{\boxplus,(a)}}(\varepsilon,d)}\\
& := \min\{N \in \N\ : \ \exists \, \cP \subseteq [0,1)^d \mbox{ s.t. } |\cP|=N \mbox{ and } L_p^{\boxplus,(a)}(\cP)\le \varepsilon L_p^{\boxplus,(a)}(\emptyset)\},
\end{align*}
grows exponentially fast with the dimension $d$. More detailed, $$N_{L_p^{\boxplus,(a)}}(\varepsilon,d) \ge C_{p,1}^d \, (1-\varepsilon) \quad \mbox{for all $\varepsilon \in (0,1)$ and $d \in \N$,}$$ where $C_{p,1}>1$ is like in \eqref{def:Cpr} with $r=1$.
\end{cor}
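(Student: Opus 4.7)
The plan is to deduce Corollary~\ref{cor3} from Corollary~\ref{cor1} via the standard identification of the $L_p$-quadrant discrepancy with the QMC worst-case error in the Sobolev space $W_{a,q}^{(1,1,\ldots,1)}([0,1]^d)$, where $q$ is the Hölder conjugate of $p$. The approach closely mirrors the proof of Corollary~\ref{cor2} for the anchored $L_p$-discrepancy, just with a different geometry of the test sets (quadrants rather than intervals with one endpoint at $a$).

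First I would fix $a\in(0,1)$ and $p\in[1,\infty)$, let $q$ be the conjugate exponent (so $q\in(1,\infty]$, which covers the endpoint $p=1$ via $q=\infty$), and invoke the identification
$$
e\bigl(W_{a,q}^{(1,1,\ldots,1)}([0,1]^d),A_{d,N}^{{\rm QMC}}\bigr)=L_p^{\boxplus,(a)}(\{\bsx_1,\ldots,\bsx_N\}),
$$
together with $L_p^{\boxplus,(a)}(\emptyset)=e(0,d)$, both recalled in the paragraph preceding the statement and attributed to \cite[Sec.~9.5.4]{NW10}. These formulas are instances of standard reproducing-kernel (for $p=2$) or Hölder-duality (for general $p$) computations: write the integration error of the QMC rule as an integral against the indicator of the quadrants $Q(\bst)$ minus its volume, apply Hölder's inequality to pair this indicator against $f^{(1,\ldots,1)}\in L_q$, and observe that the worst case is attained.

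The second step is a routine complexity comparison. Suppose $\cP=\{\bsx_1,\ldots,\bsx_N\}\subseteq [0,1)^d$ satisfies $L_p^{\boxplus,(a)}(\cP)\le \varepsilon\,L_p^{\boxplus,(a)}(\emptyset)$. Then the quasi-Monte Carlo rule $A_{d,N}^{{\rm QMC}}$ based on $\cP$ is a linear algorithm in the sense of \eqref{def:linAlg} whose worst-case error in $W_{a,q}^{(1,1,\ldots,1)}([0,1]^d)$ is at most $\varepsilon\, e(0,d)$, so that $N(\varepsilon,d)\le N$, where $N(\varepsilon,d)$ refers to the information complexity of integration in that Sobolev space. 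Taking the infimum over admissible $\cP$ yields
$$
N_{L_p^{\boxplus,(a)}}(\varepsilon,d)\ge N(\varepsilon,d).
$$
Finally, Corollary~\ref{cor1} applied with $r=1$ (and the fact that it is valid for all $q\in(1,\infty]$) gives $N(\varepsilon,d)\ge C_{p,1}^d\,(1-\varepsilon)$ with $C_{p,1}>1$ as in \eqref{def:Cpr}, and chaining the two inequalities produces the claimed lower bound.

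There is no substantial obstacle beyond bookkeeping: the two non-trivial ingredients (the worst-case error equals the quadrant discrepancy, and integration in $W_{a,q}^{(1,1,\ldots,1)}$ suffers the curse) are already in place. The only point requiring a line of care is the endpoint $p=1$, where one needs that the identification of worst-case error with $L_1$-quadrant discrepancy persists in the $L_\infty$-Sobolev setting; this follows from the same Hölder-duality argument, the extremal function being a piecewise constant sign-pattern of $f^{(1,\ldots,1)}$ rather than a genuine maximizer in the unit ball, but the supremum defining $e(F_d,A_{d,N})$ is still equal to the $L_1$-norm of the quadrant-kernel.
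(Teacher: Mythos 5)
Your proposal is correct and follows the same route as the paper: identify $L_p^{\boxplus,(a)}$ of a point set with the QMC worst-case error in $W_{a,q}^{(1,1,\ldots,1)}([0,1]^d)$ (here with the \emph{same} point set, unlike the anchored case), note that QMC rules are admissible linear algorithms so the discrepancy complexity dominates the information complexity, and conclude from Corollary~\ref{cor1} with $r=1$. The paper treats the comparison step as immediate; your explicit bookkeeping and the remark on the endpoint $p=1$ (handled via $q=\infty$ in Corollary~\ref{cor1}) match its intent.
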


Also here we point out that in this result even the endpoint $p=1$ is included. For $a=1$ the $L_p$-quadrant discrepancy at 1 is the same as the usual $L_p$-star discrepancy, but, unfortunately, also this case is not covered by Corollary~\ref{cor3}.

\subsection{Weighted integration of functions of smoothness $r$}

For $q \in (1,\infty]$ and $r \in \N$ let 
\begin{align*}
F_1=W_{0,q}^r(\R)=\{f:\R \rightarrow \R \ : \ & f^{(j)}(0)=0\, \mbox{ for $0 \le j <r$},\\
& f^{(r-1)}\, \mbox{ abs. cont. and }\, f^{(r)} \in L_q(\R)\}.
\end{align*}
As norm on $F_1$ we use $$\|f\|_{1,q}:=\|f^{(r)}\|_{L_q}.$$

We study weighted integration in the $d$-fold tensor product space $F_d=W_{0,q}^{(r,r,\ldots,r)}(\R^d)$ equipped with the  crossnorm $\|f\|_{d,q}:=\|f^{(r,r,\ldots,r)}\|_{L_q}$ with respect to a probability density $\psi:\R \to \R_0^+$, i.e., $$I_d(f)=\int_{\R^d} f(\bst) \psi(\bst)\rd \bst \quad \mbox{for $f \in F_{d,q,r}$}$$ with $\psi(\bst):=\psi(t_1)\cdots \psi(t_d)$ for $\bst=(t_1,\ldots,t_d)$. For $q=2$ it is shown in \cite{NW01} and in \cite[Sec.~11.4.1, pp.~169ff]{NW10} that the integration problem suffers from the curse of dimensionality. With our method we are now able to tackle the case $q \in (1,\infty]$. 

Again we observe that the norm $\|\cdot\|_{1,q}$ as well as the crossnorm $\|\cdot\|_{d,q}$ satisfy the $q$-property with the same parameter $q$ like in the definition of the norm. That is, we aim to apply Theorem~\ref{thm2}.

In the following we only explicate the case $q \in (1,\infty)$. The case $q=\infty$ can be treated similarly, incorporating the usual adaptions, like in Section~\ref{susec:3.2}.  

For the sake of simplicity we assume that $\psi$ is symmetric around $0$, i.e., $\psi(x)=\psi(-x)$. Furthermore, we assume that $\psi$ is such that the iterated integral 
\begin{equation}\label{cond:psi}
\int_{0}^{\infty} \int_0^t \int_0^{y_r}\ldots \int_0^{y_2} \left( \int_0^{\infty} (x-y_1)^{r-1} {\bf 1}_{[y_1,\infty)}(x) \psi(x)\rd x \right)^{p-1} \rd y_1 \ldots \rd y_r  \psi(t)\rd t < \infty,
\end{equation}
where $p$ is the H\"older conjugate of $q$. This assumption guarantees that the integrals that appear in the following are well defined.\\

At first we determine the worst-case function. For $x \in \R$ we have (in the same way as above) $f(x)=\int_0^x f^{(r)}(t) \frac{(x-t)^{r-1}}{(r-1)!} \rd t$ (where $\int_0^x f^{(r)}(t) \frac{(x-t)^{r-1}}{(r-1)!} \rd t=-\int_x^0 f^{(r)}(t) \frac{(x-t)^{r-1}}{(r-1)!} \rd t$ whenever $x <0$) and hence
\begin{align*}
\int_{\R} f(x) \psi(x) \rd x = &  - \int_{-\infty}^0 f^{(r)}(t) \int_{-\infty}^t  \frac{(x-t)^{r-1}}{(r-1)!} \psi(x) \rd x \rd t\\
& +  \int_0^{\infty} f^{(r)}(t) \int_t^{\infty}  \frac{(x-t)^{r-1}}{(r-1)!} \psi(x) \rd x \rd t.
\end{align*}
Let $$\Psi_r(t):=\left\{ 
\begin{array}{ll}
-\int_{-\infty}^0  \frac{(x-t)^{r-1}}{(r-1)!} {\bf 1}_{(-\infty,t]}(x)  \psi(x) \rd x\quad \mbox{for $t < 0$,}\\[1em]
\int_{0}^{\infty} \frac{(x-t)^{r-1}}{(r-1)!} {\bf 1}_{[t,\infty)}(x)  \psi(x) \rd x\quad \mbox{for $t \ge 0$,}
\end{array}
\right.
$$
(note that $\Psi_r(-t)=(-1)^r \Psi_r(t)$.) Then 
\begin{align*}
\int_{\R} f(x) \psi(x) \rd x =  \int_{\R} f^{(r)}(t) \Psi_r(t) \rd t.
\end{align*}
Applying H\"older's inequality, where $p$ is the H\"older conjugate of $q$, we obtain 
$$\left|\int_{\R} f(x)\psi(x) \rd x \right| \le \|f^{(r)}\|_{L_q} \|\Psi_r\|_{L_p}=\|f\|_{1,q} \|\Psi_r\|_{L_p}$$ with equality if $|f^{(r)}(t)|=c |\Psi_r|^{p-1}$ for some $c >0$. This holds for $f$ of the form
\begin{equation}\label{defh1:r}
f(t)= \int_0^{|t|} \int_0^{y_r} \int_0^{y_{r-1}}\ldots \int_0^{y_2} \left(\Psi_r(y_1)\right)^{p-1} \rd y_1 \ldots \rd y_{r-1} \rd y_r.
\end{equation}
This means that $h_1:=f$ is the worst-case function. Note that $h_1(t)=h_1(-t)$. 

We use the decomposition of $h_1$ with $$h_{1,(0)}(x)={\bf 1}_{(-\infty,0]}(x)h_1(x) \quad \mbox{and}\quad h_{1,(1)}(x)={\bf 1}_{[0,\infty)}(x) h_1(x).$$ In particular, we have $D_1=\R$, $D_{(0)}=(-\infty,0]$ and $D_{(1)}=[0,\infty)$. 
We assumed that $\psi$ is symmetric around 0, in particular, $\psi$ does not vanish over $(-\infty,0]$ and $[0,\infty)$. Hence 
\begin{equation*}
\int_{-\infty}^0 h_1(t)\psi(t)\rd t = \int_0^{\infty} h_1(t)\psi(t)\rd t= \frac{1}{2}  \int_{-\infty}^{\infty} h_1(t)\psi(t)\rd t >0.
\end{equation*}
Note that these integrals exist because of assumption \eqref{cond:psi}. So we meet all required properties~{\bf D1}-{\bf D3} and $$\frac{1}{\alpha}=\frac{I_1(h_{1,(0)}) + I_1(h_{1,(1)})}{\max(I_1(h_{1,(0)}),I_1(h_{1,(1)}))}=2.$$ Thus Theorem~\ref{thm2} implies the following result.

\begin{cor}
Let $q \in (1,\infty]$, $r \in \N$ and $\psi$ be symmetric around $0$ and assume that $\psi$ satisfies \eqref{cond:psi}. Then $\psi$-weighted integration in $W_{0,q}^{(r,r,\ldots,r)}(\R^d)$ suffers from the curse of dimensionality, in more detail, we have $$N(\varepsilon,d) \ge 2^d \, (1-\varepsilon) \quad \mbox{for all $\varepsilon \in (0,1)$ and $d \in \N$.}$$ 
\end{cor}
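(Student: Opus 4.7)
The plan is to invoke Theorem~\ref{thm2} with decomposition point $a=0$; the ingredients are essentially laid out in the paragraphs above the corollary, so my proposal is to organize them. First I would verify the $q$-property hypothesis: for $q\in(1,\infty)$, if $f,g\in F_d$ have disjoint supports then so do their mixed $r$-th partial derivatives, and hence $\|f+g\|_{d,q}^{q}=\|f\|_{d,q}^{q}+\|g\|_{d,q}^{q}$ as in \eqref{qprop}; the case $q=\infty$ is handled analogously by \eqref{infty:prop}. This reduces the task to checking \textbf{D1}--\textbf{D3}.

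Second I would identify the univariate worst-case function. Starting from the Taylor-type representation $f(x)=\int_{0}^{x} f^{(r)}(t)(x-t)^{r-1}/(r-1)!\,\rd t$ (with the usual sign flip for $x<0$) and applying Fubini, one rewrites $I_{1}(f)=\int_{\R} f^{(r)}(t)\Psi_{r}(t)\,\rd t$ with $\Psi_{r}$ as in the excerpt, so that H\"older's inequality yields $|I_{1}(f)|\le \|f\|_{1,q}\|\Psi_{r}\|_{L_p}$, with equality attained (up to sign) by the function $h_{1}$ displayed in \eqref{defh1:r}. Condition \eqref{cond:psi} is precisely what is needed to guarantee that $\|\Psi_{r}\|_{L_p}<\infty$ and that the $r$-fold antiderivative $h_{1}$ belongs to $F_{1}$; the case $q=\infty$, $p=1$ is treated identically.

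Third I would decompose $h_{1}$ at $a=0$ by setting $h_{1,(0)}:=\mathbf{1}_{(-\infty,0]}h_{1}$ and $h_{1,(1)}:=\mathbf{1}_{[0,\infty)}h_{1}$, so that \textbf{D1} and \textbf{D2} are immediate with $D_{(0)}=(-\infty,0]$ and $D_{(1)}=[0,\infty)$. For \textbf{D3} the key observation is that the absolute value $|t|$ appearing in \eqref{defh1:r} makes $h_{1}$ an even function, and since by hypothesis $\psi(-x)=\psi(x)$, it follows that $I_{1}(h_{1,(0)})=I_{1}(h_{1,(1)})=\tfrac{1}{2}I_{1}(h_{1})>0$, positivity coming from $h_{1}$ being a genuine worst-case function. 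Consequently $\alpha$ in \eqref{def2:al} is exactly $1/2$, and Theorem~\ref{thm2} delivers $N(\varepsilon,d)\ge(1-\varepsilon)(1/\alpha)^{d}=(1-\varepsilon)\,2^{d}$.

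The only real technical content is the construction of $h_{1}$ and verifying that it lies in $F_{1}$: this is exactly what condition \eqref{cond:psi} is designed to supply, so the step I would expect to be the main obstacle is really just bookkeeping. The symmetry of $\psi$ is the feature that collapses $\alpha$ all the way down to $1/2$ and thereby produces the optimal rate $2^{d}$; without symmetry the same argument still goes through but with a strictly larger $\alpha$ and hence a smaller base in the exponential lower bound.
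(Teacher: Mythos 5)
Your proposal is correct and follows essentially the same route as the paper: verify the $q$-property, derive the worst-case function $h_1$ from \eqref{defh1:r} via the Taylor representation and H\"older's inequality, split $h_1$ at $a=0$ into $\mathbf{1}_{(-\infty,0]}h_1$ and $\mathbf{1}_{[0,\infty)}h_1$, and use the evenness of $h_1$ together with the symmetry of $\psi$ to get $\alpha=1/2$ in Theorem~\ref{thm2}. No gaps.
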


That is, we generalized the example in \cite[Sec. 11.4.1, pp.~169f]{NW10} from $q=2$ to general $q \in (1,\infty]$.

\section{Positive quadrature rules}\label{sec:meth3}

The big issue in the application of the methods presented so far is to check whether the fooling function $g_d$ satisfies $\|g_d\|_d \le \|h_d\|_d$. This is guaranteed by means of the rather general conditions {\bf D4} and {\bf DP4}, respectively, which however are in most cases very difficult to verify. This problem can be overcome for positive linear rules $A_{N,d}^+$ like in \eqref{def:linAlg}, but only with non-negative weights $a_1,\ldots,a_N \ge 0$. We present a method that is based on spline approximations of the worst-case function. In order to state the results we slightly adapt the definitions of the minimal error and the information complexity. Define
$$e^+(N,d):=\min_{A_{d,N}^+} e(F_d, A_{d,N}^+),$$ where here the minimum is extended over all linear algorithms of the form \eqref{def:linAlg} based on $N$ function evaluations along points $\bsx_1,\bsx_2,\ldots,\bsx_N$ from $D_d$ and non-negative $a_1,\ldots,a_N \ge 0$.

For $\varepsilon \in (0,1)$ and $d \in \mathbb{N}$ define $$N^+(\varepsilon,d)=\min\{N \in \mathbb{N} \ : \ e^+(N,d) \le \varepsilon \, e(0,d)\}.$$ We stress that $N^+(\varepsilon,d)$ is a kind of restricted complexity since we only allow positive quadrature formulas.

Now we can state our main theorem for this section:

\begin{thm}\label{thm5}
Let $(F_1,\|\cdot\|_1)$ be a normed space of univariate functions over $D_1 \subseteq \R$ and let for $d \in \N$, $(F_d,\|\cdot\|_d)$ be the $d$-fold tensor product space equipped with a crossnorm $\|\cdot\|_d$. Assume in $F_1$ exists a worst-case function $h_1$.

Assume that for every $y \in D_1$ there exists a function $s_y \in F_1$ such that $s_y \ge 0$ and $s_y(y)=h_1(y)$ and 
\begin{equation}\label{est:alphabeta}
\alpha:= \max_{y \in D_1} \|s_y\|_1 < \|h_1\|_1\quad \mbox{and}\quad \beta:= \max_{y \in D_1} I_1(s_y) < I_1(h_1).
\end{equation} 

Put
\begin{equation}\label{def:C1new}
\widetilde{C}:=\min\left(\frac{\|h_1\|_1}{\alpha},\frac{I_1(h_1)}{\beta}\right).
\end{equation}
Then $\widetilde{C}>1$ and $$N^+(\varepsilon,d) \ge \widetilde{C}^d\, (1-2 \varepsilon) \quad \mbox{for all $\varepsilon \in \left(0,\frac{1}{2}\right)$ and $d \in \N$.}$$ In particular, the integration problem in $F_d$ suffers from the curse of dimensionality for positive quadrature formulas.
\end{thm}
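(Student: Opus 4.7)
The approach is contrapositive: fix any positive rule $A_{d,N}^+(f)=\sum_{k=1}^N a_k f(\bsx_k)$ with $a_k\ge 0$ whose worst-case error on $F_d$ is at most $\varepsilon\,e(0,d)$, and derive the claimed lower bound on $N$. The idea is to apply this worst-case bound \emph{twice} — once to $h_d$ itself and once to a carefully chosen non-negative combination of ``tensor splines'' $s_{\bsx_k}$ — and sandwich a common quantity between the two estimates.

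Applied to $h_d/\|h_d\|_d$, the worst-case bound gives immediately
$$
A_{d,N}^+(h_d)\ge(1-\varepsilon)(I_1(h_1))^d.
$$
For each node $\bsx_k=(x_{k,1},\ldots,x_{k,d})$ define the tensor spline $s_{\bsx_k}(\bsx):=\prod_{j=1}^d s_{x_{k,j}}(x_j)\in F_d$. The crossnorm property and Fubini yield $s_{\bsx_k}\ge 0$, $s_{\bsx_k}(\bsx_k)=h_d(\bsx_k)$, $\|s_{\bsx_k}\|_d\le\alpha^d$, and $I_d(s_{\bsx_k})\le\beta^d$. Putting $S:=\sum_{k=1}^N s_{\bsx_k}$, the non-negativity of each summand gives $S(\bsx_\ell)\ge s_{\bsx_\ell}(\bsx_\ell)=h_d(\bsx_\ell)$ at every node, and non-negativity of the weights $a_\ell$ then yields
$$
A_{d,N}^+(S)\ge A_{d,N}^+(h_d)\ge(1-\varepsilon)(I_1(h_1))^d.
$$

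On the other hand, applying the worst-case bound to $S/\|S\|_d$ (non-degenerate since $A_{d,N}^+(h_d)>0$ forces $S\not\equiv 0$) together with the triangle-inequality estimates $I_d(S)\le N\beta^d$ and $\|S\|_d\le N\alpha^d$ gives the matching upper bound
$$
A_{d,N}^+(S)\le I_d(S)+\varepsilon\,e(0,d)\|S\|_d\le N\bigl(\beta^d+\varepsilon\,e(0,d)\alpha^d\bigr).
$$
Chaining the two estimates, inserting $e(0,d)=(I_1(h_1)/\|h_1\|_1)^d$, and writing $r_1:=I_1(h_1)/\beta$ and $r_2:=\|h_1\|_1/\alpha$ (so that $\widetilde C=\min(r_1,r_2)>1$), I obtain
$$
N\ge\frac{1-\varepsilon}{r_1^{-d}+\varepsilon\,r_2^{-d}}\ge\frac{1-\varepsilon}{1+\varepsilon}\,\widetilde C^{\,d}\ge(1-2\varepsilon)\,\widetilde C^{\,d},
$$
where the second inequality replaces the larger of $r_1,r_2$ by $\widetilde C$ in both terms of the denominator, and the last step uses the elementary bound $(1-\varepsilon)/(1+\varepsilon)\ge 1-2\varepsilon$, valid for $\varepsilon\in(0,1/2)$.

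Almost every step is routine; the one genuinely delicate point — and the only place where positivity of the rule is essential — is the inequality $A_{d,N}^+(S)\ge A_{d,N}^+(h_d)$. It requires simultaneously that each $s_y\ge 0$ (so that $S$ pointwise dominates $h_d$ at every node) and that each $a_\ell\ge 0$ (so that pointwise domination at the nodes survives summation against the quadrature weights). Without positivity this link breaks, which is precisely why the conclusion concerns the restricted complexity $N^+(\varepsilon,d)$ rather than $N(\varepsilon,d)$.
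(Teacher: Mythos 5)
Your proposal is correct and follows essentially the same route as the paper: the same tensor-product splines $P_i(\bsx)=\prod_j s_{x_{i,j}}(x_j)$, the same sum $f^*=\sum_i P_i$ with $\|f^*\|_d\le N\alpha^d$ and $I_d(f^*)\le N\beta^d$, and the same crucial use of positivity to get $A_{d,N}^+(f^*)\ge A_{d,N}^+(h_d)$. The only difference is cosmetic bookkeeping at the end: the paper packages the two applications of the worst-case bound into the estimate $e(F_d,A_{d,N}^+)\ge (I_d(h_d)-I_d(f^*))_+/(2\max(\|h_d\|_d,\|f^*\|_d))$ and then splits into cases $N\le\widetilde C^d$ and $N>\widetilde C^d$, whereas you chain the two bounds directly and avoid the case distinction, arriving at the same conclusion.
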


\begin{proof}
Consider a positive linear algorithm $A_{d,N}^+$ based in nodes $\bsx_1,\ldots,\bsx_N$ in $D_d$ and with weights $a_1,\ldots,a_N\ge 0$.  Let $x_{i,j}$ be the $j$-th coordinate of the point $\bsx_i$, $i \in \{1,\ldots,N\}$ and $j \in [d]$. For $i \in \{1,\ldots,N\}$ we define functions 
$$
P_i(\bsx) = s_{x_{i,1}}(x_1) s_{x_{i,2}}(x_2) \cdots s_{x_{i,d}}(x_d),\quad \bsx=(x_1,\ldots,x_d)\in D_d.  
$$

In order to estimate the error of $A_{d,N}^+$ we consider the two functions $h_d$ and $f^*= \sum_{i=1}^N P_i$. Since $A_{d,N}^+$ is a positive rule we have $$A_{d,N}^+(f^*) \ge A_{d,N}^+(h_d).$$ Then we have the error estimate 
\begin{equation}\label{errest1linneu}
e(F_d,A_{d,N}^+)  \ge \frac{ \left(I_d(h_d) - I_d(f^*)\right)_+}{2 \max ( \Vert h_d \Vert_d, \Vert f^* \Vert_d)},
\end{equation}
which is trivially true if $I_d(h_d) \le  I_d(f^*)$ and which is easily shown if $I_d(h_d) >  I_d(f^*)$, because then 
\begin{align*}
\left(I_d(h_d) - I_d(f^*)\right)_+  \le & I_d(h_d) - A_{d,N}^+(h_d)+A_{d,N}^+(f^*)-I_d(f^*)\\
 \le & \|h_d\|_d \, e(F_d,A_{d,N}^+)+\|f^*\|_d \, e(F_d,A_{d,N}^+)\\
 \le & 2 \max(\|h_d\|_d,\|f^*\|_d) \, e(F_d,A_{d,N}^+).
\end{align*}

According to \eqref{est:alphabeta} we have $\Vert f^* \Vert_d \le N \alpha^d$ and $I_d(f^*) \le N  \beta^d$. Inserting into \eqref{errest1linneu} yields 
\begin{equation}\label{lberr17}
e^{+}(N,d) \ge e(F_d,A_{d,N}^+) \ge \frac{(I_d(h_d) - N \beta^d)_+} {2 \max(\|h_d\|_d,N \alpha^d)} .
\end{equation}

Now let $\varepsilon \in (0,1/2)$ and assume that $e^+(N,d) \le \varepsilon \, e(0,d)$. This implies 
\begin{equation}\label{lberr17a}
2 \, \varepsilon \, e(0,d) \, \max(\|h_d\|_d,N \alpha^d) \ge  (I_d(h_d) - N \beta^d)_+.
\end{equation}

Consider $\widetilde{C}$ from \eqref{def:C1new}. According to \eqref{est:alphabeta} we have $\widetilde{C}>1$.

If $N \le \widetilde{C}^d$, then we obtain $\max(\|h_d\|_d,N \alpha^d)=\|h_d\|_d$ and $I_d(h_d) \ge N \beta^d$ and hence \eqref{lberr17a} implies
\begin{align*}
I_d(h_d) - N \beta^d = &  (I_d(h_d) - N \beta^d)_+ \le 2 \, \varepsilon \, e(0,d)\, \|h_d\|_d = 2 \, \varepsilon \, I_d(h_d). 
\end{align*}
Hence $$N \ge \frac{I_d(h_d)}{\beta^d} \, (1-2\varepsilon)= \left(\frac{I_1(h_1)}{\beta}\right)^d (1-2 \varepsilon) \ge \widetilde{C}^d \, (1-2 \varepsilon).$$

If $N > \widetilde{C}^d$, then we trivially have $N \ge \widetilde{C}^d \, (1-2 \varepsilon)$. 
This yields $$N^+(\varepsilon,d)\ge \widetilde{C}^d \, (1-2 \varepsilon),$$ and we are done.
\end{proof} 

\begin{remark} \rm
Let us compare Theorem 4 with recent lower bounds 
from \cite{HKNV21,HKNV22,KV23} 
that are based on a new result of 
Vyb{\'\i}ral~\cite{Vy20}
who improved the 
Schur product theorem. 

Both lower bounds do not need a decomposition of $h_1$ 
and therefore can be applied to classes 
of analytic functions or even polynomials. 
Our result can be applied to more functionals
and more spaces, while the lower bounds
in the papers cited above  
only work for particular functionals and 
Hilbert spaces of functions. 
Our approach has the disadvantage, however, 
that it only works for positive quadrature formulas, 
such as quasi Monte Carlo methods. 
\end{remark}

Sometimes one has a decomposition of the worst-case function in the sense of Section~\ref{sec:meth1.1} and \ref{sec:meth2}. I.e., the worst-case function $h_1$ exists and can be decomposed in the form 
\begin{equation}\label{dech:partpos}
h_1(x)=h_{1,1}(x)+h_{1,2,(0)}(x)+h_{1,2,(1)}(x)
\end{equation}
with the following properties:
\begin{description}
\item[{\bf DP}$\boldsymbol{1}^+$] $h_{1,1}, h_{1,2,(0)}, h_{1,2,(1)} \in F_1$ are non-negative;  
\item[{\bf DP}$\boldsymbol{2}^+$] there exists $a \in \R$ such that the sets $$D_{(0)}=\{x \in D_1 \ : \ x \le a\} \quad \mbox{and}\quad D_{(1)}=\{x \in D_1 \ : \ x \ge a\}$$ are non-empty and the support of $h_{1,2,(0)}$ is contained in $D_{(0)}$ and the support of $h_{1,2,(1)}$ is contained in $D_{(1)}$; 
\item[{\bf DP}$\boldsymbol{3}^+$] the integrals $I_1(h_{1,2,(0)})$ and $I_1(h_{1,2,(1)})$ are positive;
\item[{\bf DP}$\boldsymbol{4}^+$] for $\alpha:=\max\left(\Vert h_{1,1} + h_{1,2,(0)} \Vert_1, \Vert h_{1,1} + h_{1,2,(1)} \Vert_1\right)$ we have $\alpha < \|h_1\|_1$.
\end{description}

Note that $h_{1,1}$ might also be 0 on $D_1$. Then we have the following corollary to Theorem~\ref{thm5}.

\begin{cor}\label{thm4}
Let $(F_1,\|\cdot\|_1)$ be a normed space of univariate functions over $D_1 \subseteq \R$ and let for $d \in \N$, $(F_d,\|\cdot\|_d)$ be the $d$-fold tensor product space equipped with a crossnorm $\|\cdot\|_d$. Assume in $F_1$ exists a worst-case function $h_1$ which can be decomposed according to \eqref{dech:partpos} satisfying properties~${\bf DP1^+}$-${\bf DP4^+}$. Put 
\begin{equation}\label{def:C}
C:=\min\left(\frac{\|h_1\|_1}{\alpha},\frac{I_1(h_1)}{\beta}\right),
\end{equation}
where $\beta := I(h_{1,1})+\max(I_1(h_{1,2,(0)}) , I_1(h_{1,2,(1)}))$.

Then $C>1$ and $$N^+(\varepsilon,d) \ge C^d\, (1-2 \varepsilon) \quad \mbox{for all $\varepsilon \in \left(0,\frac{1}{2}\right)$ and $d \in \N$.}$$ In particular, the integration problem in $F_d$ suffers from the curse of dimensionality for positive quadrature formulas.
\end{cor}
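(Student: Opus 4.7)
The plan is to reduce Corollary~\ref{thm4} to Theorem~\ref{thm5} by producing, from the decomposition \eqref{dech:partpos}, an explicit family of ``spline'' functions $\{s_y\}_{y\in D_1}$ satisfying the hypotheses of Theorem~\ref{thm5} with the constants $\alpha$ and $\beta$ of the corollary.

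First I would define the candidate spline functions piecewise, according to which ``side'' of the decomposition point $a$ the argument $y$ lies on. Concretely, set
\[
s_y := \begin{cases} h_{1,1}+h_{1,2,(0)} & \text{if } y\in D_{(0)}\setminus\{a\},\\ h_{1,1}+h_{1,2,(1)} & \text{if } y\in D_{(1)}. \end{cases}
\]
By property~${\bf DP1^+}$ each $s_y$ is a non-negative element of $F_1$. To check $s_y(y)=h_1(y)$, note that by ${\bf DP2^+}$ the supports of $h_{1,2,(0)}$ and $h_{1,2,(1)}$ intersect at most in $\{a\}$; hence if $y<a$ then $h_{1,2,(1)}(y)=0$ and
\[
s_y(y)=h_{1,1}(y)+h_{1,2,(0)}(y)=h_1(y),
\]
and symmetrically for $y>a$ (the single point $y=a$ can be assigned to either branch).

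Next I would estimate the two relevant quantities. By construction,
\[
\max_{y\in D_1}\|s_y\|_1 \le \max\bigl(\|h_{1,1}+h_{1,2,(0)}\|_1,\|h_{1,1}+h_{1,2,(1)}\|_1\bigr)=\alpha,
\]
and by linearity of $I_1$,
\[
\max_{y\in D_1} I_1(s_y) \le I_1(h_{1,1})+\max\bigl(I_1(h_{1,2,(0)}),I_1(h_{1,2,(1)})\bigr)=\beta.
\]
Property~${\bf DP4^+}$ gives $\alpha<\|h_1\|_1$, while ${\bf DP3^+}$ yields
\[
\beta = I_1(h_{1,1})+\max\bigl(I_1(h_{1,2,(0)}),I_1(h_{1,2,(1)})\bigr) < I_1(h_{1,1})+I_1(h_{1,2,(0)})+I_1(h_{1,2,(1)})=I_1(h_1),
\]
so the hypothesis \eqref{est:alphabeta} of Theorem~\ref{thm5} is satisfied with precisely the $\alpha$ and $\beta$ named in the corollary, and the constant $\widetilde C$ from \eqref{def:C1new} coincides with the constant $C$ from \eqref{def:C}.

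Finally I would invoke Theorem~\ref{thm5} directly to conclude $C>1$ and
\[
N^+(\varepsilon,d)\ge C^d\,(1-2\varepsilon) \qquad \text{for all } \varepsilon\in(0,\tfrac12),\ d\in\N,
\]
which is the curse of dimensionality for positive quadrature formulas. The only non-routine step is the choice and verification of $s_y$; once one recognizes that the two ``half-decompositions'' $h_{1,1}+h_{1,2,(0)}$ and $h_{1,1}+h_{1,2,(1)}$ dominate $h_1$ pointwise on their respective halves while having strictly smaller norm and integral, everything else is a bookkeeping translation between the hypotheses of the corollary and those of the theorem. I expect no genuine obstacle beyond being careful about the (measure-zero) overlap at the point $y=a$.
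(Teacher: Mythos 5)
Your proposal is correct and follows essentially the same route as the paper: the authors also apply Theorem~\ref{thm5} with exactly the choice $s_y=h_{1,1}+h_{1,2,(0)}$ for $y\in D_{(0)}$ and $s_y=h_{1,1}+h_{1,2,(1)}$ for $y\in D_{(1)}$, deducing \eqref{est:alphabeta} from ${\bf DP3^+}$ and ${\bf DP4^+}$. Your write-up is in fact slightly more detailed than the paper's (e.g.\ the explicit verification that $\beta<I_1(h_1)$ and the remark about the branch choice at $y=a$).
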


\begin{proof}
Choose in Theorem~\ref{thm5} $$s_y(x)=h_{1,1}(x)+\left\{
\begin{array}{ll}
h_{1,2,(0)}(x) & \mbox{if $y \in D_{(0)}$,}\\
h_{1,2,(1)}(x) & \mbox{if $y \in D_{(1)}$.}
\end{array}
\right.$$
Then $s_y(y)=h_1(y)$ for every $y \in D_1$ and \eqref{def:C1new} is satisfied because of ${\bf DP3^+}$ and ${\bf DP4^+}$. Hence, the result follows from Theorem~\ref{thm5}.
\end{proof}

\section{Applications: Part 2}\label{sec:exa2}

An important application of the method for positive quadrature rules from Section~\ref{sec:meth3} can be found in \cite{NP24} where we proved that the classical $L_p$-star discrepancy for every $p \in (1,\infty)$ suffers from the curse of dimensionality. Here we provide some further applications.

\subsection{Integration of polynomials of degree at most~2}\label{exa:poly2}

Consider $F_1:=P_2$ as the space of all real polynomials of degree at most 2 as functions over $[0,1]$, equipped with the $q$-norm $$\|f\|_q:=\left(\|f\|_{L_q}^q+\|f'\|_{L_q}^q+\|f''\|_{L_q}^q\right)^{1/q}$$ for some $q \in [1,\infty)$. Consider integration $I_1(f)=\int_0^1f(x)\rd x$ for $f \in F_1$. This problem is studied in \cite[Section~10.5.4]{NW10}, but only for the Hilbert-space case $q=2$.

The initial error for this problem is $e(0,1)=1$ and the worst-case function is $h_1(x)=1$ for $x \in [0,1]$. Obviously $\|h_1\|_q=1$ and $\int_0^1 h_1(x)\rd x =1$. 

For $y \in [0,1]$ choose $s_y(x):=1-c(y-x)^2$ with some $c \in (0,1/2)$ that will be specified later. Then $s_y \in F_1$ and $s_y \ge 0$ for every $y \in [0,1]$. Furthermore, $$I_1(s_y)=1-c\left(y^2-y+\frac{1}{3}\right)$$ and hence $\beta=1-\frac{c}{12}<1$.

We estimate $\|s_y\|_q$. Since $s_y(x) \in [0,1]$ and $q \ge 1$ we have $$\|s_y\|_{L_q}^q =\int_0^1 |s_y(x)|^q \rd x \le I_1(s_y) \le \beta = 1-\frac{c}{12}.$$ Furthermore, since $s_y'(x)=2 c (y-x)$ and $s_y''(x)=-2c$ we have $$\|s_y'\|_{L_q}^q=\int_0^1|2 c(y-x)|^q \rd x=(2 c)^q \frac{y^{q+1}+(1-y)^{q+1}}{q+1} \le \frac{(2 c)^q}{q+1}$$ and $$\|s_y''\|_{L_q}^q=\int_0^1|-2 c|^q \rd x = (2c)^q.$$ Hence $$\|s_y\|_q^q \le  1-\frac{c}{12} + (2 c)^q \frac{q+2}{q+1}.$$ For $q \in (1,\infty)$  the latter is strictly less than 1, if 
\begin{equation}\label{def:uq}
c< \left(\frac{1}{12 \cdot 2^q} \frac{q+1}{q+2}\right)^{1/(q-1)}=:u_q.
\end{equation}
Thus for the quantity in \eqref{def:C1new} we have
\begin{eqnarray*}
\widetilde{C}_q & = & \max_{c \in (0,u_q)}\min\left(\frac{1}{\left(1-\frac{c}{12} + (2 c)^q \frac{q+2}{q+1}\right)^{1/q}},\frac{1}{1-\frac{c}{12}}\right)\\
& = & \max_{c \in (0,u_q)}\left(1-\frac{c}{12} + (2 c)^q \frac{q+2}{q+1}\right)^{-1/q}.
\end{eqnarray*}
It is easily checked that the maximum is attained for $$c=\left(\frac{1}{q}\frac{1}{12 \cdot 2^q} \frac{q+1}{q+2}\right)^{1/(q-1)}=\frac{u_q}{q^{1/(q-1)}}.$$  Hence, with $u_q$ taken from \eqref{def:uq}, 
\begin{equation}\label{def:Cqpoly}
\widetilde{C}_q=\left(1-\frac{u_q}{12 q^{1/(q-1)}} + \frac{2^q u_q^q}{q^{q/(q-1)}} \frac{q+2}{q+1}\right)^{-1/q}.
\end{equation}

Now Theorem~\ref{thm5} implies:

\begin{cor}\label{corP2}
For $q \in (1,\infty)$ the integration problem in the tensor-product space based on $F_1=P_2$ suffers from the curse of dimensionality for positive quadrature formulas. More detailed, we have $N^+(\varepsilon,d) \ge \widetilde{C}_q^d \, (1-2\varepsilon)$ for all $\varepsilon \in (0,1/2)$ and $d \in \N$, where $\widetilde{C}_q$ is from \eqref{def:uq} and hence $\widetilde{C}_q>1$.
\end{cor}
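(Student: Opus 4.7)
The plan is a direct application of Theorem~\ref{thm5} using the explicit choices already laid out in the text preceding the statement. Since constants lie in $P_2$, the function $h_1 \equiv 1$ is a worst-case function with $\|h_1\|_q = 1 = I_1(h_1) = e(0,1)$. For each $y \in [0,1]$ I take the auxiliary quadratic $s_y(x) := 1 - c(y-x)^2 \in F_1$ depending on a parameter $c \in (0,1/2)$; then $s_y(y) = 1 = h_1(y)$ and nonnegativity of $s_y$ on $[0,1]$ is automatic. What remains is to produce uniform-in-$y$ bounds $\alpha = \sup_y \|s_y\|_q$ and $\beta = \sup_y I_1(s_y)$ that both stay strictly below~$1$, and to turn the resulting $\widetilde{C} > 1$ from \eqref{def:C1new} into the corollary via Theorem~\ref{thm5}.

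The integral is elementary: $I_1(s_y) = 1 - c(y^2 - y + 1/3)$ is maximized at the endpoints, giving $\beta = 1 - c/12$. For the norm, I would exploit that $0 \le s_y \le 1$ and $q \ge 1$, which yields the pointwise inequality $|s_y|^q \le s_y$ and hence $\|s_y\|_{L_q}^q \le I_1(s_y) \le 1 - c/12$. Direct integration of $s_y'(x) = 2c(y-x)$ and $s_y''(x) = -2c$ bounds $\|s_y'\|_{L_q}^q$ by $(2c)^q/(q+1)$ and $\|s_y''\|_{L_q}^q$ by $(2c)^q$, so
\[
\|s_y\|_q^q \le 1 - \frac{c}{12} + (2c)^q \frac{q+2}{q+1}.
\]
For $q > 1$ the right-hand side is strictly less than $1$ precisely when $c < u_q$ as defined in \eqref{def:uq}.

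Since $\alpha^q \ge \beta$, we have $1/\alpha \le 1/\beta^{1/q} < 1/\beta$, so the quantity $\widetilde{C}$ from \eqref{def:C1new} equals $\|h_1\|_q/\alpha = (1 - c/12 + (2c)^q(q+2)/(q+1))^{-1/q}$. Elementary calculus in $c$ locates the maximizer at $c = u_q/q^{1/(q-1)}$, producing the closed form \eqref{def:Cqpoly}, which is strictly greater than $1$ for every $q \in (1,\infty)$; invoking Theorem~\ref{thm5} then yields the desired bound $N^+(\varepsilon,d) \ge \widetilde{C}_q^d(1 - 2\varepsilon)$. The main obstacle is the norm estimate in the previous paragraph: because $s_y'' \not\equiv 0$, the derivative contribution to $\|s_y\|_q^q$ scales like $c^q$, which threatens to overwhelm the linear gain $c/12$ coming from $\|s_y\|_{L_q}^q$; balancing these two competing terms forces the explicit threshold $u_q$ on $c$ and is precisely the reason the argument restricts to $q > 1$, since for $q = 1$ the two contributions are of the same order in $c$ and this particular family $s_y$ cannot deliver $\alpha < 1$.
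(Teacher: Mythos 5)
Your proposal is correct and follows essentially the same route as the paper's proof: the same spline family $s_y(x)=1-c(y-x)^2$, the same bounds on $\|s_y\|_{L_q}^q$, $\|s_y'\|_{L_q}^q$ and $\|s_y''\|_{L_q}^q$, the same threshold $u_q$, and the same optimization over $c$ fed into Theorem~\ref{thm5}. One trivial slip: $I_1(s_y)=1-c(y^2-y+\tfrac13)$ is maximized at $y=\tfrac12$ (where $y^2-y+\tfrac13$ attains its minimum $\tfrac1{12}$), not at the endpoints (where it equals $1-\tfrac{c}{3}$); the value $\beta=1-\tfrac{c}{12}$ you use is nevertheless the correct one.
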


Exemplarily, we have the following (rounded) figures for $\widetilde{C}_q$:
$$
\begin{array}{r||c|c|c|c|c|c|c}
q & 2 & 3 & 4 & 5 & 10 & 100 & 1000\\
\hline
\widetilde{C}_q & 1.00016 & 1.00098 & 1.00161 & 1.00195 & 1.00204 & 1.00039 & 1.00004
\end{array}
$$
and $\lim_{q \rightarrow \infty} \widetilde{C}_q = 1$.

\subsection{Integration of $C^{\infty}$-functions}

Let $F_1$ be the space of $C^\infty ([0,1])$ functions such that the norm 
 $\|f\|_q:=(\sum_{\alpha =0}^{\infty}
  \|f^{(\alpha)}\|^q_{L_q})^{1/q}$ is finite, where $q \in [1,\infty)$. 
  The worst-case error of integration in $F_1$ is $e(0,1)=1$ and the worst-case function is again $h_1(x)=1$ for $x \in [0,1]$. It follows from Corollary~\ref{corP2} that for $q \in (1,\infty)$ integration in the corresponding tensor-product space suffers from the curse of dimensionality for positive quadrature formulas.
 
We do not know whether this is also true for $q=\infty$, i.e.,  for the norm $\|f\|_\infty := \max_{\alpha} \|f^{(\alpha)}\|_\infty $.

\subsection{Uniform integration without anchor condition}

As yet another application we revisit the problem of uniform integration from Section~\ref{susec:3.2}, but now we remove the anchor condition $f(a)=0$. For the sake of simplicity we restrict ourselves to the case $r=1$ and $q \in (1,\infty)$. Let 
\begin{align*}
F_1=W_q^1([0,1])=\{f:[0,1] \rightarrow \R \ : \ f\, \mbox{ abs. cont. and }\, f' \in L_q([0,1])\}
\end{align*}
equipped with the norm $$\|f\|_{1,q}=\left(|f(a)|^q+\int_0^1 |f'(t)|^q \rd t \right)^{1/q}$$ for some fixed $a \in (0,1)$. 
We study integration in the $d$-fold tensor product space $F_d=W_q^{(1,1,\ldots,1)}([0,1]^d)$ equipped with the crossnorm $$\|f\|_{d,q}=\left(\sum_{\uu \subseteq [d]} \|f_{\uu}(\cdot_\uu,a)\|_{L_q}^q\right)^{1/q},$$ where $f_{\uu}(\cdot_\uu,a)$ denotes the partial derivative of $f$ of order 1 with respect to the coordinates that belong to $\uu$ and all other components set to be $a$.  For $q=2$ it is shown in \cite[Sec.~11.5.2, pp.~188f]{NW10} that the integration problem suffers from the curse of dimensionality. We tackle the more general case and aim to apply Corollary~\ref{thm4}.

Let again $p$ be the H\"older conjugate of $q$. Using methods similar to Section~\ref{susec:3.2} it is easy to prove that the worst-case function for the integration problem in $F_1$ is 
\begin{equation}\label{h1:F1qa}
h_1(x)=1+\boldsymbol{1}_{[0,a]}(x) \frac{a^p - x^p}{p}+\boldsymbol{1}_{[a,1]}(x) \frac{(1-a)^p-(1-x)^p}{p}.
\end{equation}
We have $$\int_0^1 h_1(x)\rd x=1+\frac{a^{p+1}+(1-a)^{p+1}}{p+1}$$ and $$\|h_1\|_{1,q}=\left(1+\frac{a^{p+1}+(1-a)^{p+1}}{p+1}\right)^{1/q}.$$ The initial error is $$e(0,1)=\left(1+\frac{a^{p+1}+(1-a)^{p+1}}{p+1}\right)^{1/p}.$$

We immediately see the suitable decomposition of $h_1$, namely 
\begin{eqnarray*}
h_{1,1}(x) & = & 1\\
h_{1,2,(0)}(x) & = & \boldsymbol{1}_{[0,a]}(x) \frac{a^p - x^p}{p}\\
h_{1,2,(1)}(x) & = & \boldsymbol{1}_{[a,1]}(x) \frac{(1-a)^p-(1-x)^p}{p}.
\end{eqnarray*}
Then ${\bf DP1^+}$ and ${\bf DP2^+}$ are satisfied, furthermore $$\int_0^1 h_{1,2,(0)}(x) \rd x = \frac{a^{p+1}}{p+1}>0\quad \mbox{ and }\quad \int_0^1 h_{1,2,(1)}(x) \rd x = \frac{(1-a)^{p+1}}{p+1}>0$$ 
such that ${\bf DP3^+}$ is satisfied. Finally, since $a \in (0,1)$, 
$$\|h_{1,1}+h_{1,2,(0)}\|_{1,q}^q = 1+\frac{a^{p+1}}{p+1} < \|h_1\|_{1,q}^q$$ and $$\|h_{1,1}+h_{1,2,(1)}\|_{1,q}^q = 1+\frac{(1-a)^{p+1}}{p+1} < \|h_1\|_{1,q}^q,$$ such that also ${\bf DP4^+}$ is satisfied. Thus Corollary~\ref{thm4} implies:

\begin{cor}\label{cor5}
For $q\in (1,\infty)$ and $a\in (0,1)$ integration in the tensor product space $W_q^{(1,1,\ldots,1)}([0,1]^d)$ suffers from the curse of dimensionality for positive quadrature formulas. In more detail, we have $$N^+(\varepsilon,d) \ge C_p^d \,(1-2 \varepsilon)\quad \mbox{for all $\varepsilon \in \left(0,\frac{1}{2}\right)$ and $d \in \N$,}$$ where $p$ is the H\"older conjugate of $q$ and where 
\begin{equation}\label{def:Cap}
C_p=\left(\frac{p+1+a^{p+1}+(1-a)^{p+1}}{p+1+\max(a^{p+1},(1-a)^{p+1})}\right)^{1/q}>1.
\end{equation}
\end{cor}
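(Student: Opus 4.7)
The plan is to invoke Corollary~\ref{thm4} applied to the three-part decomposition of $h_1$ that has already been written down. Properties ${\bf DP1^+}$-${\bf DP4^+}$ have all been verified in the text preceding the corollary's statement, so the remaining task is purely computational: identify the constants $\alpha$ and $\beta$ entering Corollary~\ref{thm4}, and then evaluate the minimum in \eqref{def:C}.

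First I would compute $\alpha$. Both auxiliary functions $h_{1,1} + h_{1,2,(0)}$ and $h_{1,1} + h_{1,2,(1)}$ take the value $1$ at the anchor $x=a$, while their derivatives $-x^{p-1}$ and $(1-x)^{p-1}$ are supported on $[0,a]$ and $[a,1]$ respectively. Using the identity $q(p-1)=p$ that follows from $1/p+1/q=1$, the $q$-th power of the $L_q$-norm of each derivative collapses to $a^{p+1}/(p+1)$ or $(1-a)^{p+1}/(p+1)$, which gives $\alpha^q = 1 + \max(a^{p+1},(1-a)^{p+1})/(p+1)$. The computation of $\beta$ is even more direct: elementary integration yields $I_1(h_{1,1}) = 1$, $I_1(h_{1,2,(0)}) = a^{p+1}/(p+1)$ and $I_1(h_{1,2,(1)}) = (1-a)^{p+1}/(p+1)$, whence $\beta$ coincides numerically with $\alpha^q$.

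The last step is to take the minimum in \eqref{def:C}. Substituting the formulas for $\|h_1\|_{1,q}$ and $I_1(h_1)$ displayed before the corollary, I find $\|h_1\|_{1,q}/\alpha = C_p$ and $I_1(h_1)/\beta = C_p^q$, with $C_p$ exactly as in \eqref{def:Cap}. Since $C_p > 1$ and $q > 1$, one has $C_p < C_p^q$, so the minimum equals $C_p$, and Corollary~\ref{thm4} then delivers the claimed bound $N^+(\varepsilon,d) \ge C_p^d(1-2\varepsilon)$. No serious obstacle is anticipated here: the technical content has already been absorbed by the identification of the worst-case function \eqref{h1:F1qa} and by the verification of the decomposition conditions carried out in the discussion immediately preceding the corollary; what remains is only the observation that the relevant ratio of $L_q$-norms is, up to a power, the same as the ratio of the integrals.
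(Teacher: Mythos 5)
Your proposal is correct and follows the same route as the paper: both apply Corollary~\ref{thm4} to the decomposition $h_{1,1}=1$, $h_{1,2,(0)}=\boldsymbol{1}_{[0,a]}\frac{a^p-x^p}{p}$, $h_{1,2,(1)}=\boldsymbol{1}_{[a,1]}\frac{(1-a)^p-(1-x)^p}{p}$, with the verification of ${\bf DP1^+}$--${\bf DP4^+}$ carried out exactly as in the text preceding the corollary. Your additional observations that $\beta=\alpha^q$, $I_1(h_1)=\|h_1\|_{1,q}^q$, and hence $I_1(h_1)/\beta=C_p^q>C_p=\|h_1\|_{1,q}/\alpha$, so the minimum in \eqref{def:C} equals $C_p$, are accurate and merely make explicit what the paper leaves implicit in the stated value of $C_p$.
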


For example for $a=1/2$ we have
$$
\begin{array}{r||c|c|c|c|c}
p & 2 & 3 & 4 & 5 & 10\\
\hline
C_p & 1.0198 & 1.01023 & 1.00465 & 1.00208 & 1.00004
\end{array}
$$

\subsection{Generalized $L_p$-discrepancy anchored at $a$}
For a point set $\{\bsy_1,\ldots,\bsy_N\}$ in $[0,1)^d$ the generalized $L_p$-discrepancy anchored at $a$ is defined as $$L_p^{\pitchfork, (a), {\rm gen}}(\{\bsy_1,\ldots,\bsy_N\}):=\left(\sum_{\uu \subseteq [d]} \left(L_p^{\pitchfork, (a)}(\{\bsy_{1,\uu},\ldots,\bsy_{N,\uu}\}) \right)^p \right)^{1/p},$$ where $L_p^{\pitchfork, (a)}$ denotes the $L_p$-discrepancy anchored in $a$ from Section~\ref{subsec3.3} and where, for $\uu \subseteq [d]$, $\bsy_{k,\uu}$ denotes the projection of $\bsy_k$ to the coordinates $j$ in $\uu$. 

Consider integration in the tensor product space $W_q^{(1,1,\ldots,1)}([0,1]^d)$. It can be shown that the worst-case error of a quasi-Monte Carlo rule \eqref{QMC} for $f \in W_q^{(1,1,\ldots,1)}([0,1]^d)$, which is based on a point set $\{\bsx_1,\ldots,\bsx_N\}$, is exactly the generalized $L_p$-discrepancy anchored at $a$ of the point set $\{\bsy_1,\ldots,\bsy_N\}$, where $\bsy_k:=\bsa-\bsx_k \pmod{1}$ and where $\bsa=(a,\ldots,a)$, i.e., $$e(W_q^{(1,1,\ldots,1)}([0,1]^d),A_{d,N}^{{\rm QMC}})=L_p^{\pitchfork, (a), {\rm gen}}(\{\bsy_1,\ldots,\bsy_N\}),$$ where $1/p+1/q=1$. See \cite[Section~9.5.3]{NW10}. The initial generalized $L_p$-discrepancy anchored at $a$ is $$L_p^{\pitchfork, (a), {\rm gen}}(\emptyset)=\left(1+\frac{a^{p+1}+(1-a)^{p+1}}{p+1}\right)^{d/p}=e(0,d),$$ where the latter is the initial error of integration in $W_q^{(1,1,\ldots,1)}([0,1]^d)$.

From Corollary~\ref{cor5} we therefore obtain:

\begin{cor}
For $a \in (0,1)$ and for $p \in (1,\infty)$ the generalized $L_p$-discrepancy anchored at $a$ suffers from the curse of dimensionality, i.e., 
\begin{align*}
\lefteqn{N_{L_p^{\pitchfork, (a),{\rm gen}}}(\varepsilon,d)}\\
&:= \min\{N \in \N\ : \ \exists \, \cP \subseteq [0,1)^d \mbox{ s.t. } |\cP|=N \mbox{ and } L_p^{\pitchfork, (a),{\rm gen}}(\cP)\le \varepsilon L_p^{\pitchfork, (a),{\rm gen}}(\emptyset)\},
\end{align*}
grows exponentially fast with the dimension $d$. More detailed, $$N_{L_p^{\pitchfork, (a),{\rm gen}}}(\varepsilon,d) \ge C_p^d\, (1-2 \varepsilon) \quad \mbox{for all $\varepsilon \in \left(0,\frac{1}{2}\right)$ and $d \in \N$,}$$ where $C_p>1$ is like in \eqref{def:Cap}.
\end{cor}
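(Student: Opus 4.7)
The plan is to reduce this corollary directly to Corollary~\ref{cor5} by exploiting the identification, established in the paragraph preceding the statement, between the generalized $L_p$-discrepancy anchored at $a$ and the worst-case error of quasi-Monte Carlo rules for integration in $W_q^{(1,1,\ldots,1)}([0,1]^d)$. The key observation is that a QMC rule is a positive linear quadrature rule (all weights equal $1/N \ge 0$), so the QMC-restricted complexity is bounded below by the positive-rule complexity $N^+(\varepsilon,d)$ treated in Corollary~\ref{cor5}.

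Concretely, first I would recall the identity
\[
e(W_q^{(1,1,\ldots,1)}([0,1]^d), A_{d,N}^{\mathrm{QMC}}) = L_p^{\pitchfork,(a),\mathrm{gen}}(\{\bsy_1,\ldots,\bsy_N\}),
\]
where $\bsy_k = \bsa - \bsx_k \pmod 1$ and $1/p+1/q=1$, together with $L_p^{\pitchfork,(a),\mathrm{gen}}(\emptyset) = e(0,d)$. Since the map $\bsx \mapsto \bsa - \bsx \pmod 1$ is a bijection of $[0,1)^d$ (and sends $N$-point sets to $N$-point sets), the existence of a point set $\cP \subseteq [0,1)^d$ of cardinality $N$ with $L_p^{\pitchfork,(a),\mathrm{gen}}(\cP) \le \varepsilon\, L_p^{\pitchfork,(a),\mathrm{gen}}(\emptyset)$ is equivalent to the existence of a QMC rule on $N$ nodes with worst-case error at most $\varepsilon\, e(0,d)$. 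Therefore
\[
N_{L_p^{\pitchfork,(a),\mathrm{gen}}}(\varepsilon,d) = N^{\mathrm{QMC}}(\varepsilon,d),
\]
where $N^{\mathrm{QMC}}(\varepsilon,d)$ denotes the minimal $N$ such that some QMC rule on $N$ nodes achieves the error demand $\varepsilon$.

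Next, since every QMC rule is a positive linear quadrature rule (weights $1/N > 0$), restricting the infimum in the definition of the minimal error to QMC rules can only increase it, hence
\[
N^+(\varepsilon,d) \le N^{\mathrm{QMC}}(\varepsilon,d) = N_{L_p^{\pitchfork,(a),\mathrm{gen}}}(\varepsilon,d).
\]
Applying Corollary~\ref{cor5}, which yields $N^+(\varepsilon,d) \ge C_p^d(1-2\varepsilon)$ for the space $W_q^{(1,1,\ldots,1)}([0,1]^d)$ with $C_p$ given by \eqref{def:Cap}, gives the desired bound.

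There is no real obstacle in this proof; the only points requiring care are (i) the fact that the shift $\bsx \mapsto \bsa - \bsx \pmod 1$ is a measure-preserving bijection of $[0,1)^d$, so the correspondence between quadrature nodes and discrepancy point sets is size-preserving, and (ii) verifying that $e(0,d)$ for the integration problem coincides with the initial discrepancy, which is the cited identity. Both facts are explicitly stated in the paragraph preceding the corollary, so the proof consists essentially of invoking Corollary~\ref{cor5} through this equivalence.
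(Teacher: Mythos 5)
Your proposal is correct and follows essentially the same route as the paper: the identity between the QMC worst-case error in $W_q^{(1,1,\ldots,1)}([0,1]^d)$ and the generalized $L_p$-discrepancy anchored at $a$ (together with the matching initial errors), combined with the observation that QMC rules are positive quadrature rules, reduces the statement to Corollary~\ref{cor5}. Your explicit remarks about the bijectivity of the shift $\bsx \mapsto \bsa - \bsx \pmod 1$ and the comparison $N^+(\varepsilon,d) \le N^{\mathrm{QMC}}(\varepsilon,d)$ simply make precise what the paper leaves implicit.
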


\subsection{Generalized $L_p$-quadrant discrepancy at $a$}
Likewise, also a generalized $L_p$-quadrant discrepancy at $a$ is studied. For a point set $\{\bsy_1,\ldots,\bsy_N\}$ in $[0,1)^d$ the generalized $L_p$-quadrant discrepancy at $a$ is defined as $$L_p^{\boxplus, (a), {\rm gen}}(\{\bsy_1,\ldots,\bsy_N\}):=\left(\sum_{\uu \subseteq [d]} \left(L_p^{\boxplus, (a)}(\{\bsy_{1,\uu},\ldots,\bsy_{N,\uu}\}) \right)^p \right)^{1/p},$$ where $L_p^{\boxplus, (a)}$ denotes the $L_p$-quadrant discrepancy at $a$ from Section~\ref{subsec3.4} and where, for $\uu \subseteq [d]$, $\bsy_{k,\uu}$ denotes the projection of $\bsy_k$ to the coordinates $j$ in $\uu$. 

Again, it can be shown that the worst-case error of a quasi-Monte Carlo rule \eqref{QMC} for $f \in W_q^{(1,1,\ldots,1)}([0,1]^d)$, which is based on a point set $\{\bsx_1,\ldots,\bsx_N\}$, is exactly the generalized $L_p$-quadrant discrepancy at $a$ of the same point set, i.e., $$e(W_q^{(1,1,\ldots,1)}([0,1]^d),A_{d,N}^{{\rm QMC}})=L_p^{\boxplus,(a),{\rm gen}}(\{\bsx_1,\ldots,\bsx_N\}),$$ where $1/p+1/q=1$. See \cite[Section~9.5.4]{NW10} and \cite[Sec.~11.5.3]{NW10}. The initial generalized $L_p$-quadrant discrepancy at $a$ is $$L_p^{\boxplus, (a),{\rm gen}}(\emptyset)=\left(1+\frac{a^{p+1}+(1-a)^{p+1}}{p+1}\right)^{d/p}=e(0,d),$$ where the latter is the initial error of integration in $W_q^{(1,1,\ldots,1)}([0,1]^d)$.

From Corollary~\ref{cor5} we therefore obtain:

\begin{cor}
For $a \in (0,1)$ and for $p \in (1,\infty)$ the generalized $L_p$-quadrant discrepancy at $a$ suffers from the curse of dimensionality, i.e., 
\begin{align*}
\lefteqn{N_{L_p^{\boxplus,(a),{\rm gen}}}(\varepsilon,d)}\\
&:= \min\{N \in \N\ : \ \exists \, \cP \subseteq [0,1)^d \mbox{ s.t. } |\cP|=N \mbox{ and } L_p^{\boxplus, (a),{\rm gen}}(\cP)\le \varepsilon L_p^{\boxplus,(a),{\rm gen}}(\emptyset)\},
\end{align*}
grows exponentially fast with the dimension $d$. More detailed, $$N_{L_p^{\boxplus,(a),{\rm gen}}}(\varepsilon,d) \ge C_p^d\, (1-2 \varepsilon) \quad \mbox{for all $\varepsilon \in \left(0,\frac{1}{2}\right)$ and $d \in \N$,}$$ where $C_p>1$ is like in \eqref{def:Cap}.
\end{cor}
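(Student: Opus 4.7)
The plan is to mirror the proof of the preceding corollary for the generalized $L_p$-discrepancy anchored at $a$, by exploiting the stated identification between the worst-case error of a QMC rule in $W_q^{(1,1,\ldots,1)}([0,1]^d)$ and the generalized $L_p$-quadrant discrepancy at $a$. The key observation, already made in the paper, is that any QMC rule $A_{d,N}^{\mathrm{QMC}}$ is a positive linear quadrature formula, since its weights equal $1/N>0$. Hence the lower bound supplied by Corollary~\ref{cor5} for positive rules applies to QMC rules in particular.

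First I would fix $a\in(0,1)$, $p\in(1,\infty)$, and set $q$ to be the Hölder conjugate of $p$. Let $\mathcal{P}=\{\bsx_1,\ldots,\bsx_N\}\subseteq[0,1)^d$ be any point set with
$$L_p^{\boxplus,(a),\mathrm{gen}}(\mathcal{P})\le\varepsilon\, L_p^{\boxplus,(a),\mathrm{gen}}(\emptyset).$$
Using the identity
$$e(W_q^{(1,1,\ldots,1)}([0,1]^d),A_{d,N}^{\mathrm{QMC}})=L_p^{\boxplus,(a),\mathrm{gen}}(\mathcal{P})$$
together with $L_p^{\boxplus,(a),\mathrm{gen}}(\emptyset)=e(0,d)$, this rewrites to
$$e(W_q^{(1,1,\ldots,1)}([0,1]^d),A_{d,N}^{\mathrm{QMC}})\le \varepsilon\, e(0,d).$$
Since $A_{d,N}^{\mathrm{QMC}}$ is a positive quadrature formula, this in turn forces $e^+(N,d)\le \varepsilon\, e(0,d)$ for the integration problem in $W_q^{(1,1,\ldots,1)}([0,1]^d)$.

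Next I would invoke Corollary~\ref{cor5}, which asserts that for every $\varepsilon\in(0,1/2)$ the positive information complexity of integration in $W_q^{(1,1,\ldots,1)}([0,1]^d)$ satisfies $N^+(\varepsilon,d)\ge C_p^d(1-2\varepsilon)$ with the constant $C_p>1$ defined in \eqref{def:Cap}. Thus any admissible $\mathcal{P}$ must satisfy $N\ge C_p^d(1-2\varepsilon)$, and taking the minimum over all such $\mathcal{P}$ yields
$$N_{L_p^{\boxplus,(a),\mathrm{gen}}}(\varepsilon,d)\ge C_p^d(1-2\varepsilon),$$
which is the stated bound and shows exponential growth in $d$.

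There is essentially no genuine obstacle; all the analytic work (determining the worst-case function, the decomposition, and verifying \textbf{DP1}${}^+$--\textbf{DP4}${}^+$) has already been carried out in the proof of Corollary~\ref{cor5}. The only point that warrants a sentence of care is the QMC-discrepancy correspondence itself: one must check that the same identity used for the anchored discrepancy works verbatim for the quadrant variant, relying on the reference \cite[Section~9.5.4]{NW10} and the definition of the crossnorm $\|\cdot\|_{d,q}$. Once that is quoted, the reduction to Corollary~\ref{cor5} is immediate.
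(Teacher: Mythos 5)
Your proposal is correct and is essentially the paper's own argument: the paper likewise deduces the result directly from Corollary~\ref{cor5} via the identity between the QMC worst-case error in $W_q^{(1,1,\ldots,1)}([0,1]^d)$ and the generalized $L_p$-quadrant discrepancy, using that QMC rules are positive quadrature formulas. Your explicit spelling-out of the chain $N \ge N^+(\varepsilon,d) \ge C_p^d(1-2\varepsilon)$ is exactly the intended (and in the paper implicit) reduction.
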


\vspace{0.5cm}
\noindent{\bf Author's Address:}\\

\noindent Erich Novak, Mathematisches Institut, FSU Jena, Ernst-Abbe-Platz 2, 07740 Jena, Germany. Email: erich.novak@uni-jena.de\\

\noindent Friedrich Pillichshammer, Institut f\"{u}r Finanzmathematik und Angewandte Zahlentheorie, JKU Linz, Altenbergerstra{\ss}e 69, A-4040 Linz, Austria. Email: friedrich.pillichshammer@jku.at

\end{document}